\documentclass[11pt]{amsart}

\usepackage{amsmath, amssymb, amscd, cancel, graphicx, soul, stmaryrd}
\usepackage[T1]{fontenc}
\usepackage{mathdots}
\usepackage[utf8]{inputenc}
\usepackage{subcaption} 

\headheight=7pt         \topmargin=14pt
\textheight=574pt       \textwidth=445pt
\oddsidemargin=18pt     \evensidemargin=18pt
\usepackage[dvipsnames,usenames]{color}
\usepackage[colorlinks=true, urlcolor=myblue, linkcolor=myblue, citecolor=myblue]{hyperref}
\usepackage{amsfonts}
\usepackage{enumerate}
\usepackage{paralist}
\usepackage{xypic}
\usepackage{hyperref}
\usepackage{color}
\usepackage{marginnote}
\usepackage{mathrsfs}

\definecolor{myblue}{RGB}{59,86,140}

\hypersetup{
linkbordercolor={1 0 0}, 
citebordercolor={0 1 0} 
}
\setlength{\parskip}{2pt}

\setlength{\tabcolsep}{3pt}

\newtheorem{thm}{Theorem}[section]

\newtheorem{lem}[thm]{Lemma}
\newtheorem{prop}[thm]{Proposition}

\newtheorem{defin}[thm]{Definition}

\clubpenalty=3000
\widowpenalty=3000

\begin{document}
\bibliographystyle{alpha}

\title[]%
{The Conway knot is not slice}

\author{Lisa Piccirillo}
\address{Department of Mathematics, University of Texas, Austin, TX 78712}
\email{lpiccirillo@math.utexas.edu}

\maketitle

\begin{abstract}
A knot is said to be slice if it bounds a smooth properly embedded disk in $B^4$. We demonstrate that the Conway knot, $11n34$ in the Rolfsen tables, is not slice. This completes the classification of slice knots under 13 crossings, and gives the first example of a non-slice knot which is both topologically slice and a positive mutant of a slice knot.\\
\end{abstract}

\section{Introduction}
The classical study of knots in $S^3$ is 3-dimensional; a knot is defined to be trivial if it bounds an embedded disk in $S^3$. Concordance, first defined by Fox in \cite{Fox62}, is a 4-dimensional extension; a knot in $S^3$ is trivial in concordance if it bounds an embedded disk in $B^4$. In four dimensions one has to take care about what sort of disks are permitted. A knot is slice if it bounds a smoothly embedded disk in $B^4$, and topologically slice if it bounds a locally flat disk in $B^4$. There are many slice knots which are not the unknot, and many topologically slice knots which are not slice. 

It is natural to ask how characteristics of 3-dimensional knotting interact with concordance and questions of this sort are prevalent in the literature. Modifying a knot by positive mutation is particularly difficult to detect in concordance; we define positive mutation now. 

A Conway sphere for an oriented knot $K$ is an embedded $S^2$ in $S^3$ that meets the knot transversely in four points. The Conway sphere splits $S^3$ into two 3-balls, $B_1$ and $B_2$, and $K$ into two tangles $K_{B_1}$ and $K_{B_2}$. Any knot $K^*$  obtained from $K_{B_1}$ and $K_{B_2}$ after regluing $B_1$ to $B_2$ via an involution of the Conway sphere is called a mutant of $K$. If $K^*$ inherits a well-defined orientation from that of $K_{B_1}$ and $K_{B_2}$ then $K^*$ is a positive mutant of $K$. See Figure \ref{fig:conwayandKT} for an example.

Positive mutation preserves many three dimensional invariants of a knot, including the Alexander, Jones, Homfly, and Kauffman polynomials \cite{MT90}, the S-equivalence class \cite{KL01}, hyperbolic volume \cite{Rub87}, and 2-fold branched cover. Other powerful invariants are conjectured to be preserved by mutation, such as Khovanov homology \cite{Bar05} and the $\delta$-graded knot Floer groups \cite{BL12}. Studying the sliceness of knots which arise as a positive mutant of a slice knot is even trickier: all abelian and all but the subtlest metabelian sliceness obstructions vanish for such a knot, Rasmussen's s-invariant is conjectured to vanish \cite{Bar02}, and it is unknown whether any Heegaard Floer sliceness obstructions can detect such a knot. 

\begin{figure}
\includegraphics[width=10cm]{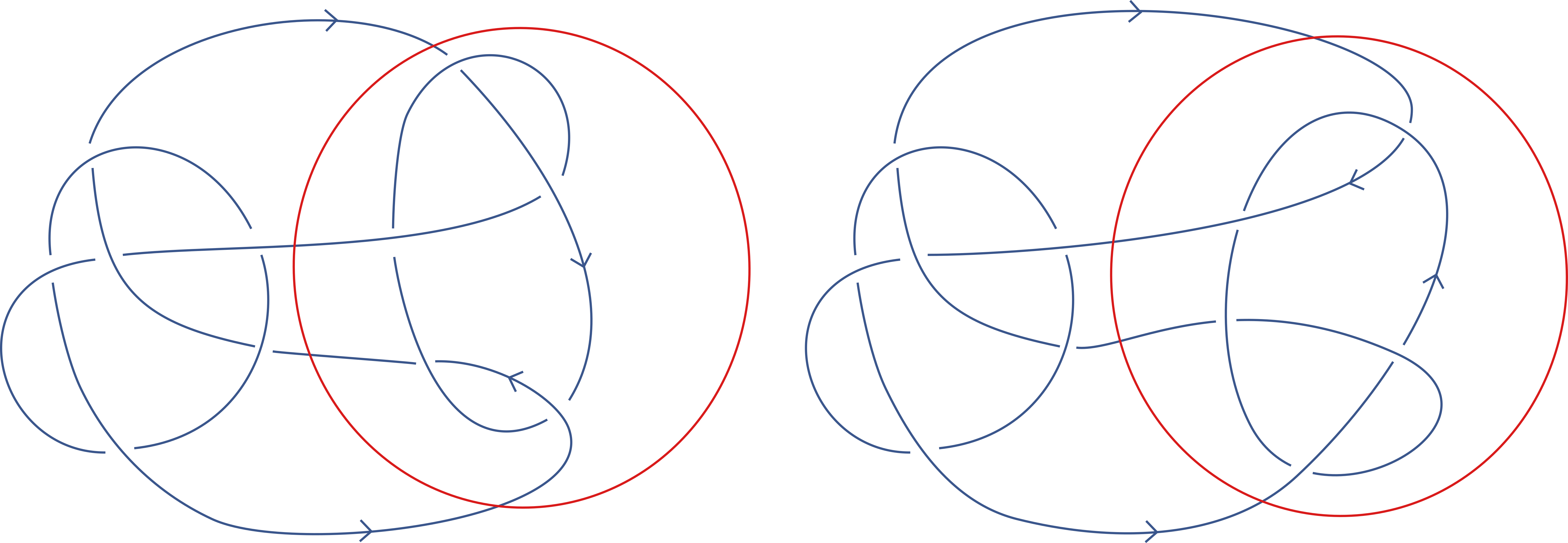}
\caption{Positive mutation from the Conway knot $C$ to the Kinoshita-Teresaka knot}
\label{fig:conwayandKT}
\end{figure}

The smallest pair of positive mutant knots, the 11 crossing Conway knot $C$ and Kinoshita-Teresaka knot, were discovered by Conway in \cite{Con69}; see Figure \ref{fig:conwayandKT}. These knots are also the smallest nontrivial knots with Alexander polynomial 1, hence all of their abelian and metabelian sliceness obstrucitons vanish, and by Freedman \cite{Fre83} both knots are topologically slice. The Conway and Kinoshita-Teresaka knots were first distinguished in isotopy by Riley in \cite{Ril71} via carefull study of their groups and later their Seifert genera were distinguished by Gabai \cite{Gab86}. One can readily show that the Kinoshita-Teresaka knot is slice. Despite the wealth of sliceness obstructions constructed in the past 20 years which are not known to be mutation invariant and do not necessarily vanish for Alexander polynomial 1 knots, it has remained open whether the Conway knot is slice. 

In 2001 Kirk and Livingston gave the first examples of non slice knots which are positive mutants of slice knots \cite{KL01}, and other examples have appeared since \cite{KL05, HKL10, Mil17}. All of these works rely on careful analysis of metabelian sliceness obstructions. Since metabelian obstructions obstruct topological sliceness, these techniques cannot detect any topologically slice knot which is a mutant of a slice knot, and are especially poorly suited to an Alexander polynomial 1 knot such as the Conway knot.

\begin{thm}
The Conway knot is not slice
\end{thm}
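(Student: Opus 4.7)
The plan is to produce a companion knot $K'$ with the same smooth $4$-manifold $0$-trace as the Conway knot $C$, and then to show that $K'$ has nonzero Rasmussen $s$-invariant. Since $|s|/2$ bounds the smooth slice genus from below, $K'$ will be non-slice; and since sliceness is detected by the $0$-trace, $C$ will be non-slice too. This strategy deliberately sidesteps the obstacle flagged in the introduction: on $C$ itself, the abelian and metabelian obstructions vanish, $s(C)$ is conjecturally mutation-invariant and hence zero, and Heegaard Floer obstructions are not known to see $C$.

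The first ingredient I would establish is the trace-sibling principle: a knot $K \subset S^3$ is smoothly slice if and only if the generator of $H_2(X_0(K);\mathbb{Z})$ is realized by a smoothly embedded $2$-sphere, where $X_0(K) = B^4 \cup_K h^{(2)}$ denotes the $0$-trace. One direction caps a slice disk with the core of the $2$-handle to form a sphere. The other uses that an $H_2$-generator meets the cocore of the $2$-handle algebraically once, so standard $4$-dimensional tubing and isotopy make the intersection geometrically a single point; excising the $2$-handle piece leaves a slice disk in $B^4$. Consequently, if $X_0(K) \cong X_0(K')$ then $K$ is slice iff $K'$ is slice.

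The second, and hardest, ingredient is constructing such a $K'$. I would search for a two-component ``dualizable'' link $R \cup C \subset S^3$, with $R$ an unknot of framing $0$, such that a sequence of handle slides followed by the blow-down of $R$ presents $X_0(C)$ instead as $X_0(K')$ for some alternative knot $K'$; this is the RBG-link/dualizable-pattern philosophy. The trick is to engineer $K'$ with small enough complexity that its Khovanov homology, and hence its $s$-invariant, is computable by current software. Once such a $K'$ is exhibited and $s(K') \neq 0$ is verified, the trace-sibling principle finishes the proof.

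The main obstacle is the creative construction of a suitable $K'$: the trace-sibling lemma is essentially routine, and the $s$-invariant computation is in principle algorithmic, but producing an alternative knot $K'$ with $X_0(K') \cong X_0(C)$ that is also tame enough for a practical Khovanov computation requires genuine ingenuity. Everything else in the argument — verifying diffeomorphism of $0$-traces via Kirby calculus, and running a computer calculation of $s(K')$ — is, once the right $K'$ is in hand, a finite check.
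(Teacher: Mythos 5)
Your overall strategy is the one the paper uses (find $K'$ with the same $0$-trace as $C$, show $s(K')\neq 0$), but your first ingredient, the ``trace-sibling principle,'' is justified by an argument that does not work, and in the form you state it it is not a routine lemma at all. You claim that if the generator of $H_2(X_0(K))$ is represented by a smoothly embedded sphere then $K$ is slice, by tubing away excess intersections with the cocore until the sphere meets it geometrically once. Tubing together two oppositely-signed intersection points of the \emph{same} sphere along an arc in the cocore adds a handle to the surface, so the genus goes up and you no longer have a sphere; the Norman trick, which avoids this by tubing into a parallel copy of the dual surface, is unavailable here because the cocore is a disk with boundary on $\partial X_0(K)$, not a closed surface. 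In fact the statement you are asserting is exactly the question of whether a $0$-shake slice knot is slice, which is a well-known open problem (and the reason notions like shake genus exist as something distinct from slice genus); so this step is a genuine gap, not a finite check.

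The fix is the one the paper makes: replace your sphere-generator criterion with the trace embedding lemma, namely that $K$ is slice if and only if $X(K)$ embeds smoothly in $S^4$. Both directions of that lemma are elementary (a slice disk complement plus the dual $4$-ball gives the embedding; conversely an embedded trace contains the piecewise-linear sphere made of the cone on $K$ and the core, and deleting a neighborhood of the cone point yields a slice disk in $B^4$), and it immediately gives that knots with diffeomorphic $0$-traces are slice or not slice together. Beyond this, note that your second ingredient is only a plan: the actual production of a specific $K'$ with $X(K')\cong X(C)$ that is small enough for a Khovanov computation is the substantive content of the paper (it exploits that $C$ has unknotting number one to build the dualizable/RBG link, followed by a long handle-calculus verification), and the paper then pins down $s(K')=2$ from the computed Khovanov homology together with the bidegree of the Lee spectral sequence differentials. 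With the corrected lemma and an explicit $K'$, your argument would coincide with the paper's.
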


This completes the classification of slice knots of under 13 crossings \cite{CL05} and gives the first example of a non-slice knot which is both a positive mutant of a slice knot and topologically slice. 

Since there are no sliceness obstructions known to be well suited to detecting the Conway knot, we abandon the Conway knot. We will instead construct a knot $K'$ such that the Conway knot is slice if and only if $K'$ is slice, and concern ourselves with detecting the sliceness of this $K'$. (Experts will note that our $K'$ is not concordant to the Conway knot.) To construct such a $K'$ we will be interested in the following manifold. 

\begin{defin}
A knot trace $X(K)$ is a four manifold obtained by attaching a 0-framed 2-handle to $B^4$ with attaching sphere $K$.
\end{defin}
\noindent We will use $\cong$ to denote diffeomorphisms of manifolds. The following observation is folklore, for an early use see \cite{KM78}.
\begin{lem}{}
$K$ is slice if and only if $X(K)$ smoothly embeds in $S^4$.
\label{lem:slicetrace}
\end{lem}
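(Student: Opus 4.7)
The plan is to prove both implications of the biconditional using explicit handle decompositions, translating symmetrically between smooth slice disks for $K$ and smooth embeddings of $X(K)$ into $S^4$. For the forward direction, I would start from a smooth slice disk $D \subset B^4$ with $\partial D = K$. Decomposing $S^4 = B^4_- \cup_{S^3} B^4_+$ as two $4$-balls along the equatorial $S^3$ containing $K$, place $D$ as a properly embedded disk in $B^4_+$. A tubular neighborhood $\nu(D) \subset B^4_+$ is diffeomorphic to $D^2 \times D^2$, and its restriction to $K = \partial D$ gives a framing of $K$ in $S^3$. I would show this framing equals the Seifert (i.e.\ $0$-)framing: a nonvanishing section of the normal bundle $\nu(D)|_K$ produces a pushoff $K'$ of $K$ in $S^3$ which is disjoint from $D$, so $\mathrm{lk}(K,K')$ equals the algebraic intersection of $K'$ with $D$ in $B^4$, which is zero. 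Hence $B^4_- \cup \nu(D)$ is diffeomorphic to $X(K)$ and is smoothly embedded in $S^4$.

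For the backward direction, I would start from an embedding $f \colon X(K) \hookrightarrow S^4$ and extract a slice disk from the core of the $2$-handle. Writing $X(K) = B^4_0 \cup h$, the core $D := D^2 \times \{0\} \subset h$ is a smoothly properly embedded $2$-disk in $X(K)$ with $\partial D = K \subset \partial B^4_0 = S^3$ and interior disjoint from $B^4_0$. Under $f$, the image $f(D)$ is a smoothly embedded disk in $S^4$ whose boundary is $K \subset \partial f(B^4_0)$ and whose interior lies in $\overline{S^4 \setminus f(B^4_0)}$. Identifying this complementary region with a standard $4$-ball then realizes $K$ as slice.

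The delicate steps are the framing identification in the forward direction (which I addressed above via the linking number / Seifert framing calculation) and, in the backward direction, the identification of the exterior of the inner $B^4$ with a standard $4$-ball. The latter is the main conceptual point to justify carefully: it is classical in this handle-theoretic setting, and amounts to noting that after choosing the embedding $f$, the image of the $0$-handle can be taken (up to isotopy, or simply in the reverse construction) as a standardly embedded $4$-ball with standard $4$-ball complement, so that $f(D)$ exhibits a genuine slice disk for $K$. Modulo these standard checks, the proof is a clean handle-theoretic dictionary between the two sides of the equivalence.
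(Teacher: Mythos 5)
Your forward direction is the same as the paper's (slice disk in one hemisphere, its tubular neighborhood union the opposite $4$-ball is $X(K)$), with the added bonus that you verify the framing is the $0$-framing via the linking-number computation, a point the paper leaves implicit. Your backward direction, however, takes a genuinely different route. The paper avoids ever having to identify the complement of an embedded $4$-ball: it builds a piecewise linear $2$-sphere in $X(K)$ as the cone on $K$ (coned into the $0$-handle) union the core of the $2$-handle, pushes it into $S^4$ by the given embedding, and then deletes a small standard neighborhood of the single non-smooth point (the cone point); the complement of a point-neighborhood is manifestly $B^4$, and the remaining smooth part of the sphere is the slice disk. You instead use only the core of the $2$-handle, with boundary $K$ on $\partial B^4_0$, and must then argue that $\overline{S^4\smallsetminus f(B^4_0)}\cong B^4$. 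That step is correct but is exactly where the content of your argument sits, and your justification (``can be taken up to isotopy as a standardly embedded $4$-ball'') should be pinned to a named result: the Palais--Cerf disk theorem, which says any smooth embedding of $D^4$ into a connected $4$-manifold is ambiently isotopic to a standard one, so the complementary region of $f(B^4_0)$ is a standard ball; note this is not the (open) smooth $4$-dimensional Schoenflies problem precisely because one side is given as an embedded ball. Two small wording points: the core is not ``properly embedded in $X(K)$'' (its boundary $K$ lies in the interior of $X(K)$, on the image of $\partial B^4_0$), though what you actually use --- boundary on $\partial B^4_0$, interior disjoint from $B^4_0$ --- is right; and the knot you obtain in $\partial f(B^4_0)$ is the image of $K$ under a diffeomorphism of $S^3$, which is $K$ up to isotopy (and possibly mirroring, which is harmless for sliceness). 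In exchange for invoking the disk theorem, your argument stays entirely smooth, whereas the paper's cone trick trades that for a brief excursion through a PL sphere with one singular point.
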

\begin{proof}
For the `only if' direction: Consider $S^4$ and a smooth $S^3$ therein which decomposes $S^4$ into the union of two 4-balls $B_1$ and $B_2$. Consider $K$ sitting in this $S^3$. Since $K$ is slice, we can find a smoothly embedded disk $D_K$ which $K$ bounds in $B_1$. Observe now that $B_2\cup \overline{ \nu (D_K)} \cong X(K)$ is smoothly embedded in $S^4$. 

\noindent For the `if' direction: Let $F:S^2\to X(K)$ be a piecewise linear embedding such that the image of $F$ consists of the union of the cone on $K$ with the core of the two handle. Notice that $F$ is smooth away from the cone point $p$. Let $i:X(K)\to S^4$ be a smooth embedding. Then $(i\circ F)$ is a piecewise linear embedding of $S^2$ in $S^4$, which is smooth away from $i(p)$. Note that $W:=S^4\smallsetminus\nu(i(p))\cong B^4$ and that the restriction of $(i \circ F)$ to the complement of a small neighborhood of $F^{-1}(p)$ in $S^2$ is a smooth embedding of $D^2$ in $W \cong B^4$. Further, if we choose this neighborhood to be the inverse image of a sufficiently small neighborhood of $i(p)$ we have that $(i \circ F)(D^2 \smallsetminus \nu(F^{-1}(p)))$ intersects $\partial W$ in the knot $K$. 
\end{proof}

Then for any knots $K$ and $K'$ with $X(K)\cong X(K')$, $K$ is slice if and only if $K'$ is slice. There exists a small body of literature on producing pairs of knots $K$ and $K'$ with $X(K)\cong X(K')$, we will rely here on the dualizable patterns construction which was pioneered by Akbulut in \cite{Akb77}, developed by Lickorish \cite{Lic79} and Gompf-Miyazaki \cite{GM95}, and recently re-interpreted by the author \cite{Pic18}. Unknotting number one knots fit into this construction (a related statement appears in \cite{AJOT13}); using this we prove the following.
\begin{prop}\label{Prop:J'}
The knot $K'$ in Figure \ref{fig:J'} has $X(C)\cong X(K')$. 
\end{prop}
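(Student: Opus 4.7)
The plan is to apply the dualizable patterns construction of \cite{Pic18}, which associates to each unknotting number one knot $K$ a partner knot $K'$ satisfying $X(K) \cong X(K')$. Since the Conway knot $C$ has unknotting number one, the construction applies directly, and the content of the proposition is to identify the resulting partner with the specific knot $K'$ depicted in Figure \ref{fig:J'}.

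To set the stage, I would fix a diagram of $C$ with a designated unknotting crossing and introduce the corresponding \emph{crossing circle} $c \subset S^3 \setminus C$: an unknot for which $\pm 1$-framed surgery on $c$ realizes the crossing change and hence sends $C$ to the unknot. The two-component framed link $L = (C, 0) \cup (c, \pm 1)$ in $S^3$ describes a four-manifold $W$ with two $2$-handles, and $W$ is the bridge between the two trace descriptions.

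The main content is a sequence of Kirby moves on $L$ that exhibits $W$ with a second two-handle decomposition. Schematically, one handle-slides $C$ over parallel copies of $c$ (geometrically implementing the crossing change, turning the $0$-framed component into an unknot) and then reinterprets the diagram by exchanging the roles of the two components. After suitable isotopies, the resulting framed link has the form $(K', 0) \cup (c', \pm 1)$, so that $W$ is now recognized as $X(K')$ together with an auxiliary $\pm 1$-framed unknotted handle of the same topological type as on the $C$-side. Comparing the two decompositions then yields $X(C) \cong X(K')$, and reading $K'$ off the final diagram identifies it with Figure \ref{fig:J'}. The guarantee that these two decompositions give the same trace (rather than traces related only up to blow-up) is exactly the content of the dualizable patterns construction as set up in \cite{Pic18}.

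The main obstacle is the explicit Kirby-calculus computation rather than any conceptual novelty. One must select the unknotting crossing on a convenient diagram of $C$, track framings and linking numbers accurately through the sequence of handle slides, and recognize the resulting $0$-framed knot as the picture in Figure \ref{fig:J'} via further isotopy. The dualizable patterns framework guarantees the existence of such a sequence; the work is in making it explicit for this particular $C$. Combined with Lemma \ref{lem:slicetrace}, the conclusion $X(C) \cong X(K')$ then transfers the sliceness question for $C$ to the corresponding question for $K'$.
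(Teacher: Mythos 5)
There is a genuine gap, and it lies exactly in the mechanism you propose for encoding the unknotting crossing. You attach the auxiliary handle along the crossing circle $c$ as a \emph{$\pm 1$-framed 2-handle}, so your intermediate manifold $W$ built from $(C,0)\cup(c,\pm1)$ is not $X(C)$; it is $X(C)$ with an extra 2-handle (a blow-up-type modification). Even if, after slides, you can also exhibit $W$ as $X(K')$ with an extra $\pm1$-framed handle, you cannot ``compare the two decompositions'' and cancel the auxiliary handles: there is no cancellation statement of this kind for smooth 4-manifolds, and the most you could conclude is that $X(C)$ and $X(K')$ become diffeomorphic after blowing up, which is strictly weaker than $X(C)\cong X(K')$ and useless for the sliceness transfer via Lemma \ref{lem:slicetrace}. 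Your closing remark that the dualizable patterns construction of \cite{Pic18} ``guarantees'' the two descriptions give the same trace rather than traces related only up to blow-up mischaracterizes that construction: it is not a result that upgrades blow-up-related descriptions, and nothing in it repairs the step above. As written, the key diffeomorphism $X(C)\cong X(K')$ is asserted but not obtained.

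The construction actually needed (and used in the paper) replaces the $\pm1$-framed crossing circle by a three-component link $B\cup G\cup R$ in which the auxiliary curve $R$ is a \emph{dotted circle}, i.e.\ a carved 1-handle, not a 2-handle, so no blow-up ever occurs. The hypotheses are that $B\cup R$ is isotopic to $B\cup\mu_B$, that $G\cup R$ is isotopic to $G\cup\mu_G$, and that $lk(B,G)=0$; then cancelling $R$ against $G$ exhibits the associated manifold $X$ as $X(C)$, while sliding $G$ over $B$ until it misses the disk bounded by $R$ and then cancelling $R$ against $B$ exhibits the same $X$ as $X(K')$, with $K'$ the image of $G$ under those slides (Theorem \ref{Thm:diffeotraces}). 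Proposition \ref{Prop:unknotting} shows how an unknotting-number-one diagram of $C$ produces such a link: $R$ is a parallel of the diagram with a clasp near the unknotting crossing and $G$ is a geometric meridian of $R$ forming a cancelling 1-2 pair, as in Figure \ref{fig:nearc1} --- note that $R$ is \emph{not} the crossing circle, and a dotted crossing circle would not cancel against $C$ since its disk meets $C$ twice. The proof of the proposition is then the explicit handle calculus of Figure \ref{Fig:whoa}: slides, one 1-2 cancellation, and isotopies terminating in the 0-framed diagram of $K'$. So your overall strategy (exploit unknotting number one plus explicit Kirby calculus) is the right spirit, but the specific two-2-handle, $\pm1$-framed setup must be replaced by the dotted-circle construction for the argument to close.
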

\begin{figure}
\includegraphics[width=5cm]{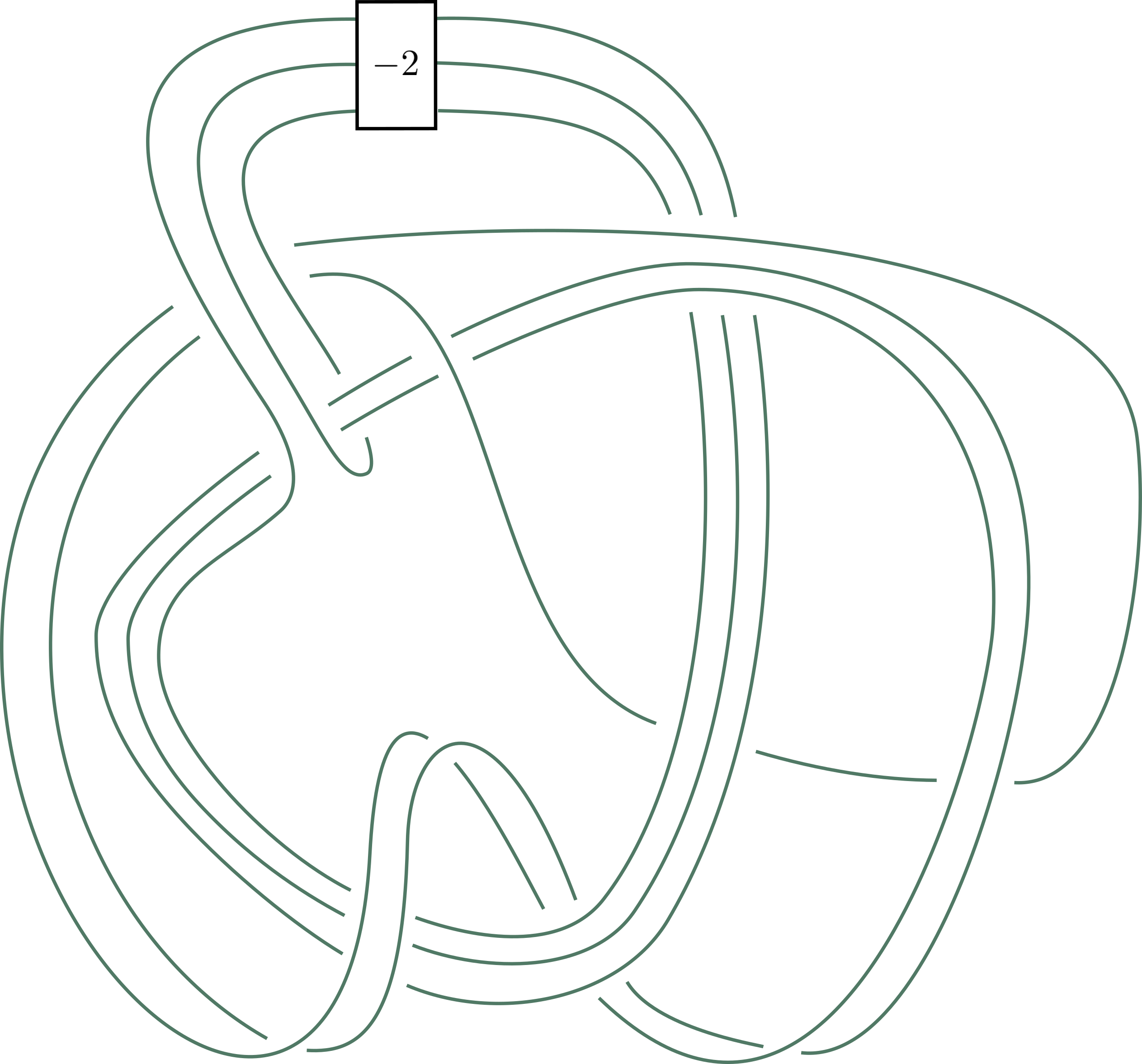}
\caption{The knot $K'$ shares a trace with the Conway knot}
\label{fig:J'}
\end{figure}

Thus it suffices to prove that $K'$ is not slice. The advantage of this perspective is that we do not have any reason to expect $K'$ is a mutant of a slice knot, so we hope that not all slice obstructions for $K'$ will vanish. This hope is complicated by the fact that $K'$ has Alexander polynomial 1, thus all its abelian and metabelian sliceness invariants vanish. In general it is difficult to distinguish knots with diffeomorphic traces in concordance (see \cite{Kir97}, problem 1.21), however, recent work of the author \cite{Pic18} and Miller and the author \cite{MP18} demonstrates that Rasmussen's s-invariant and the Heegaard Floer correction terms of the double branched covers can be used to distinguish such knots. Using the s-invariant, we show 
\begin{thm}\label{Thm:J'ns}
$K'$ is not slice.
\end{thm}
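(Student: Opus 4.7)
The plan is to invoke Rasmussen's theorem that the $s$-invariant vanishes on smoothly slice knots, and to exhibit $s(K')\neq 0$ by a direct computation from the diagram in Figure \ref{fig:J'}. Recall that $s$ is an even-integer-valued concordance invariant satisfying $|s(K)|\le 2g_4(K)$; in particular, slice knots have $s=0$. Thus it suffices to prove $s(K')\neq 0$.

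To compute $s(K')$ I would work with the Lee (or Bar-Natan) deformation of the Khovanov chain complex built from the diagram of $K'$. Two canonical generators survive to the $E_\infty$ page, and the average of their quantum gradings equals $s(K')$. Because $K'$ has Alexander polynomial $1$ and is not expected to admit a positive braid or quasipositive surface presentation, there is no obvious structural lower bound on $s(K')$, and one is essentially forced to compute. In practice this would be carried out either by hand-reducing the Khovanov complex via delooping and Gaussian elimination before identifying the Lee-surviving generators, or by feeding the PD code for Figure \ref{fig:J'} into an established implementation (Bar-Natan's JavaKh, Shumakovitch's KhoHo, or the Lipshitz--Sarkar software). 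The expected output is $s(K')=2$.

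The main obstacle is purely computational: $K'$ has on the order of twenty crossings, so the unreduced Khovanov complex is large, and some care is needed in the reduction to keep the calculation tractable. Note also that a naive attempt to attack the Conway knot $C$ directly via this strategy is blind, since $s$ is conjectured to be mutation-invariant and $s$ of the (slice) Kinoshita--Teresaka knot is $0$; the whole point of passing to the trace-sibling $K'$, which has no reason to be a positive mutant of anything slice, is to break this obstruction. Once $s(K')\neq 0$ is established, Rasmussen's theorem immediately yields that $K'$ cannot bound a smooth disk in $B^4$, completing the proof.
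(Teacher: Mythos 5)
Your proposal is correct and is essentially the paper's argument: establish $s(K')=2$ by a machine-assisted computation from the diagram in Figure \ref{fig:J'} and then invoke Rasmussen's theorem that slice knots have vanishing $s$. The only difference is in implementation: rather than computing the filtered Lee complex (or $s$) directly, the paper computes the rational Khovanov homology of $K'$ with Bar-Natan's FastKh and then uses that the differentials in the Lee spectral sequence have bidegree $(1,4(n-1))$ to see that the generator of $Kh^{0,3}(K')$ can neither map to nor be hit by anything, hence survives to $E_\infty$, which pins down $s(K')=2$ from the Khovanov table alone.
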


The s-invariant shares many formal properties with Ozsv{\'a}th-Szab{\'o}'s $\tau$ invariant. It is perhaps worth remarking then that for any $K'$ with $X(C)\cong X(K')$ one can show $\tau(K')=0$. 

In Section \ref{Sec:cons} we construct $K'$ and prove Proposition \ref{Prop:J'}. In Section \ref{Sec:obst} we compute $s(K')$ and prove Theorem \ref{Thm:J'ns}. We will assume familiarity with handle calculus, for details see \cite{GS99}.

\section{Acknowledgments}\label{Sec:ack}
\noindent The author was reminded of this problem during Shelly Harvey's talk at Bob Gompf's birthday conference. The author is in frequent conversation with Allison N. Miller, and those insightful conversations inform this work. The author is deeply indebted to her advisor John Luecke, whose constant encouragement, context, and insights are indispensable to her work. 

\section{Constructing K' which shares a trace with the Conway knot}\label{Sec:cons}
\noindent We begin by recalling the dualizable patterns construction, as presented in \cite{Pic18}. Let $L$ be a three component link with (blue, green, and red) components $B,G,$ and $R$ such that the following hold: the sublink $B\cup R$ is isotopic in $S^3$ to the link $B\cup \mu_B$ where $\mu_B$ denotes a meridian of $B$, the sublink $G\cup R$ is isotopic to the link $G\cup \mu_G$, and $lk(B,G)=0$. From $L$ we can define an associated four manifold $X$ by thinking of $R$ as a 1-handle, in dotted circle notation, and $B$ and $G$ as attaching spheres of 0-framed 2-handles. In a moment we will also define a pair of knots $K$ and $K'$ associated to $L$.
\begin{thm}\cite{Pic18} \label{Thm:diffeotraces}
$X\cong X(K)\cong X(K')$. 
\end{thm}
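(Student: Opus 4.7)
The plan is to apply Kirby calculus to the handle decomposition of $X$ and perform two different $1$-handle/$2$-handle cancellations, one producing $X(K)$ and the other producing $X(K')$. The handle decomposition of $X$ has one $0$-handle, a single $1$-handle coming from the dotted circle $R$, and two $0$-framed $2$-handles coming from $B$ and $G$.

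To establish $X \cong X(K)$, I would invoke the hypothesis that $B \cup R$ is isotopic in $S^3$ to $B \cup \mu_B$. Extending this to an ambient isotopy of $S^3$ lets me redraw the link diagram with $R$ placed as a meridian of $B$; the green component $G$ is carried along to some new knot, which we take as the definition of $K$, and the diffeomorphism type of $X$ is unchanged. In this new position, $R$ (as a dotted $1$-handle) and $B$ (as a $0$-framed $2$-handle) form a canceling $1$/$2$-handle pair. Cancelling them leaves a handle decomposition consisting of a single $0$-framed $2$-handle attached along $K$, that is, exactly $X(K)$. By a completely symmetric argument using the hypothesis on $G \cup R$, the same procedure with the roles of $B$ and $G$ swapped defines $K'$ as the image of $B$ and shows $X \cong X(K')$.

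The main subtlety, and the step I expect to be the principal obstacle, is verifying that the surviving $2$-handle is indeed $0$-framed after the isotopy and cancellation. As $R$ is slid into the meridional position of $B$, the component $G$ may wind around $B$, potentially introducing framing twists, and one must also track the effect on the slope of $G$ of any crossings that $G$ acquires with itself under the ambient isotopy. The hypothesis $lk(B,G) = 0$ is precisely what guarantees that the net contribution of these windings to the framing is zero, so that the resulting $2$-handle has framing exactly $0$. A careful framing computation of this type is carried out in \cite{Pic18}.
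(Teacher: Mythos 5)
Your overall strategy (two different $1$/$2$-handle cancellations from the same diagram of $X$) is the same as the paper's, but there is a genuine gap at the cancellation step, and it infects the very definition of the knots. Because $G\cup R$ is isotopic to $G\cup\mu_G$, we have $lk(G,R)=\pm1$, so after you isotope $R$ into meridional position for $B$ the green curve necessarily runs over the dotted $1$-handle (it meets the disk $D_R$ bounded by $R$ at least once). Hence you cannot simply ``erase'' the pair $(R,B)$ and keep the isotopy image of $G$: canceling a $1$-handle against a $2$-handle that runs over it once reroutes every other strand passing through $D_R$ along a parallel push-off of $B$. Diagrammatically this is exactly the step the paper makes explicit --- slide $G$ over $B$ until $G\cap D_R=\emptyset$, and only then cancel. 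The surviving attaching circle is therefore a band sum of (the isotoped) $G$ with parallel copies of $B$, in general a different knot from the isotopy image of $G$ that you declared to be $K$. Since the theorem's $K$ and $K'$ are \emph{defined} by this procedure, omitting the slides gives the wrong knot, not merely an incomplete argument. (Your labels are also swapped relative to the paper --- canceling $B$ produces $K'$ and canceling $G$ produces $K$ --- which is harmless for the symmetric statement but matters later, when the $K$ produced this way is identified with the original unknotting number one knot.)

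Relatedly, the ``main subtlety'' you flag is misdiagnosed. An ambient isotopy of a framed link never changes framing coefficients, so no framing twists can be ``introduced'' by moving $R$ into meridional position. The hypothesis $lk(B,G)=0$ is needed precisely at the handle slides you omitted: sliding $G$ over the $0$-framed $B$ changes the framing of $G$ by $f_B\pm 2\,lk(G,B)=0$ and preserves $lk(\cdot,B)=0$, so after all the slides the surviving $2$-handle is still $0$-framed and the result is a $0$-framed knot trace. Once you insert the slides (with this framing bookkeeping) before canceling, and do the symmetric procedure with $B$ and $G$ interchanged, your argument becomes the paper's proof.
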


\begin{proof}
Isotope $L$ to a diagram in which $R$ has no self crossings (hence such that $R$ bounds a disk $D_R$ in the diagram) and in which $B\cap D_R$ is a single point. Slide $G$ over $B$ as needed to remove the intersections of $G$ with $D_R$. After the slides we can cancel the two handle with attaching circle $B$ with the one handle and we are left with a handle description for a 0-framed knot trace; this knot is $K'$. 

To construct $K$ and see $X\cong X(K)$, perform the above again with the roles of $B$ and $G$ reversed. 
\end{proof}

\begin{prop}\label{Prop:unknotting}
For any unknotting number 1 knot $K$, there exists a link $L$ as above and 4-manifold $X$ associated to $L$ as above so that $X\cong X(K)$. 
\end{prop}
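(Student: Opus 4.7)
The plan is to start from the standard handle decomposition of $X(K)$ as $B^4$ with a single $0$-framed $2$-handle on $K$, introduce a canceling $1$-handle/$2$-handle pair positioned at the unknotting crossing, perform a single handle slide, and observe that the resulting diagram already has the form of a dualizable pattern link. Let $R\subset S^3$ be a crossing circle realizing an unknotting crossing change of $K$, that is, an unknot bounding a disk $D$ that meets $K$ transversely in two points $p_1$, $p_2$ of opposite sign.

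For the construction, first choose a small $0$-framed unknot $B$ that is a meridian of $R$ at a point of $R$ chosen far from both $p_1$ and $p_2$, so that $B$ is disjoint from $K$ and from $D$; in particular $B\cup R$ is a Hopf link and $lk(B,K)=0$. Interpreting $R$ as a dotted $1$-handle and $B$ as a $2$-handle canceling it, the handle diagram with $R$ dotted and $0$-framed $2$-handles on $B$ and $K$ still presents $X(K)$. Now slide $K$ over $B$ along a band $b$ joining $K$ near $p_1$ to the $0$-framed push-off of $B$, routed so that $b$ avoids the interior of $D$; call the resulting attaching circle $G$. Because the short arc of $K$ absorbed into $b$ contains $p_1$, while $b$ and the push-off of $B$ both avoid $D$, the new curve $G$ meets $D$ in the single point $p_2$; and since $lk(K,B)=0$ with both $2$-handles $0$-framed, $G$ still has framing $0$. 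Handle slides preserve the $4$-manifold, so the link $L:=B\cup G\cup R$ (with $R$ dotted and $B$, $G$ both $0$-framed) still describes $X(K)$.

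It remains to verify the three dualizable pattern hypotheses for $L$. The sublink $B\cup R$ is a Hopf link by construction, hence isotopic to $B\cup \mu_B$. For $G\cup R$: the disk $D$ has $\partial D=R$ and meets $G$ transversely at the single point $p_2$, so $D$ minus a small open disk around $p_2$ is an annulus lying entirely in $S^3\setminus G$ and realizing an ambient isotopy from $R$ to a small meridian of $G$, giving $G\cup R\cong G\cup \mu_G$. Finally $lk(B,G)=lk(B,K)+lk(B,B_\parallel)=0+0=0$, since $B$ is disjoint from $K$ and the $0$-framed push-off $B_\parallel$ of $B$ has zero self-linking with $B$. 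The one step that requires geometric care is the routing of the band $b$: it must be drawn in the complement of the interior of $D$ so that the short arc of $K$ it consumes contains $p_1$ but not $p_2$. This is straightforward once $B$ is placed along $R$ well away from the crossing region and $b$ is routed around the outside of $D$.
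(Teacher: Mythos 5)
Your overall strategy---take $R$ to be a crossing circle for the unknotting crossing, introduce $B$ as a $0$-framed meridional $2$-handle cancelling the dotted $R$, and slide $K$ once over $B$ to obtain the third component $G$---is viable, and is a close cousin of the paper's proof (which instead takes $R$ to be a parallel of the diagram with the crossing changed, so that $R$ is unknotted, and slides $K$ itself down to the meridian-like component). But the geometric configuration you describe does not exist, so the key claim that $G$ meets $D$ only at $p_2$ rests on impossible premises. First, a meridian $B$ of $R$ can never be disjoint from $D$: since $lk(B,R)=\pm1$ and $D$ is a Seifert surface for $R$, both $B$ and its $0$-framed push-off $B_\parallel$ must intersect $D$ (algebraically once). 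Second, the band $b$ cannot avoid the interior of $D$ while its attaching arc on $K$ contains $p_1$: perturbing $b$ transverse to $D$, the intersection $b\cap D$ is a compact $1$-manifold whose boundary lies on $\partial b$ (the band is disjoint from $R=\partial D$), and under your disjointness assumptions this $1$-manifold would have the single boundary point $p_1$, which is impossible. Concretely, the arc of $b\cap D$ emanating from $p_1$ must exit through $\partial b$, so either a long edge of $b$ or the retained part of $B_\parallel$ crosses $D$, and in either case $G$ meets $D$ in more than the one point $p_2$. A smaller slip: the points $p_1,p_2$ need not have opposite signs at an unknotting crossing (the two strands can be parallel, e.g.\ at a crossing of the standard trefoil diagram, where $lk(K,R)=\pm2$); fortunately you never actually use this.

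The repair is exactly the geometric content your write-up is missing: route the band along the disk, i.e.\ choose $b$ to meet $D$ in a single arc running from $p_1$ to the point $q$ where $B_\parallel$ crosses $D$, with $q$ contained in the arc of $B_\parallel$ absorbed by the slide. Then $G$ consists of $K$ minus a short arc through $p_1$, the two push-offs of the band arc to either side of $D$, and the portion of $B_\parallel$ missing its intersection with $D$; hence $G\cap D=\{p_2\}$, and your annulus argument correctly gives $G\cup R\cong G\cup \mu_G$. The remaining checks ($B\cup R$ a Hopf link, $lk(B,G)=lk(B,K)\pm lk(B,B_\parallel)=0$, the framing of $G$ staying $0$ because $lk(K,B)=0$, and cancellation of the pair $(R,B)$ giving $X\cong X(K)$) then go through as you state. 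In short, the slide must push the strand of $K$ through $p_1$ off the disk by carrying it over the $1$-handle via $B$; with the band kept off $D$ as written, no slide can achieve geometric intersection one with $D$.
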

\begin{figure}
\includegraphics[width=15cm]{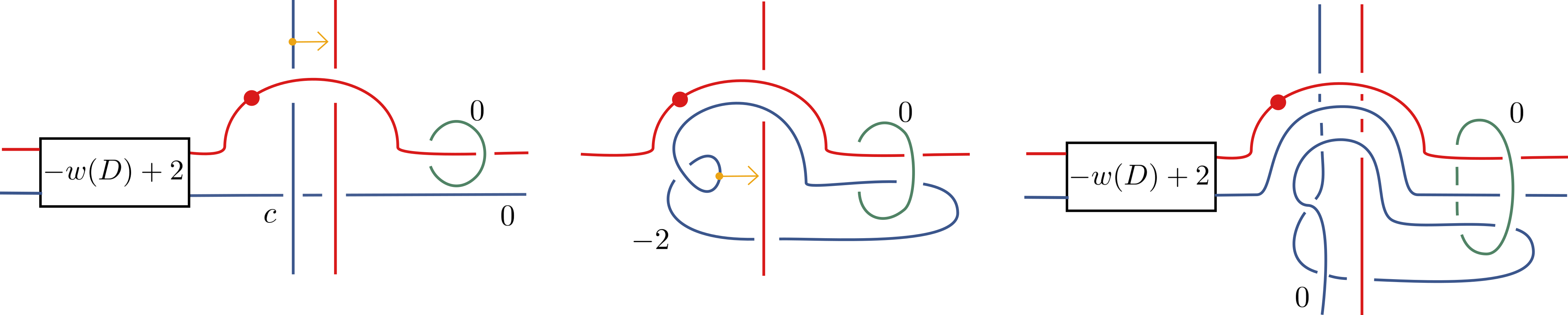}
\caption{Constructing a link $L$ associated to an unknotting number 1 knot $K$. Here $D$ denotes a (blue) diagram of $K$ with a positive unknotting crossing $c$, and $w(D)$ denotes the writhe of $D$.}
\label{fig:nearc1}
\end{figure}
\begin{proof}
Choose a (blue) diagram $D$ of $K$ with an unknotting crossing $c$. We will prove the claim for $c$ positive, the proof for $c$ negative is similar. Define knots $R$ and $G$ in $S^3\smallsetminus\nu(K)$ as in the left frame of Figure \ref{fig:nearc1}, where $R$ is a blackboard parallel of $D$ outside of the diagram. Define $X$ to be the four manifold obtained by thinking of $R$ as a one handle in dotted circle notation, and attaching 0-framed 2-handles along $K$ and $G$. Since $G$ and $R$ are a canceling 1-2 pair, we see that $X\cong X(K)$. It remains to construct a link $L$ presenting $X$, where $L$ satisfies the construction preceding Theorem \ref{Thm:diffeotraces}.

To this end, slide $K$ over $R$ as indicated in Figure \ref{fig:nearc1} to get a handle description for $X$ as in the center frame. Observe that the blue attaching sphere is isotopic to a meridian of $R$. As such, performing the indicated slide to get the right frame will yield a link $L$ with 0-framed blue attaching sphere $B$ which can be isotoped so that $B\cup R$ is isotopic to $B\cup \mu_B$, and one observes that $lk(B,G)=0$. 
\end{proof}

Thus for any unknotting number one knot $K$, one can produce a link $L$ as in Theorem \ref{Thm:diffeotraces}, and use $L$ to produce a knot $K'$ with $X(K')\cong X(K)$. We remark that the unknotting number one knot $K$ is in fact isotopic to the knot $K$ produced from $L$ as in the proof of Theorem \ref{Thm:diffeotraces}, though we wont rely on that here. 

\begin{proof}[Proof of Proposition \ref{Prop:J'}]
We now produce such a $K'$ for the Conway knot. We proceed as in the proofs of Proposition \ref{Prop:unknotting} and Theorem \ref{Thm:diffeotraces}; in order to produce a diagram of $K'$ with small crossing number we will perform additional isotopies throughout. See Figure \ref{Fig:whoa}.
\begin{figure}[h]
  \begin{subfigure}[b]{0.3\textwidth}
    \includegraphics[width=\textwidth]{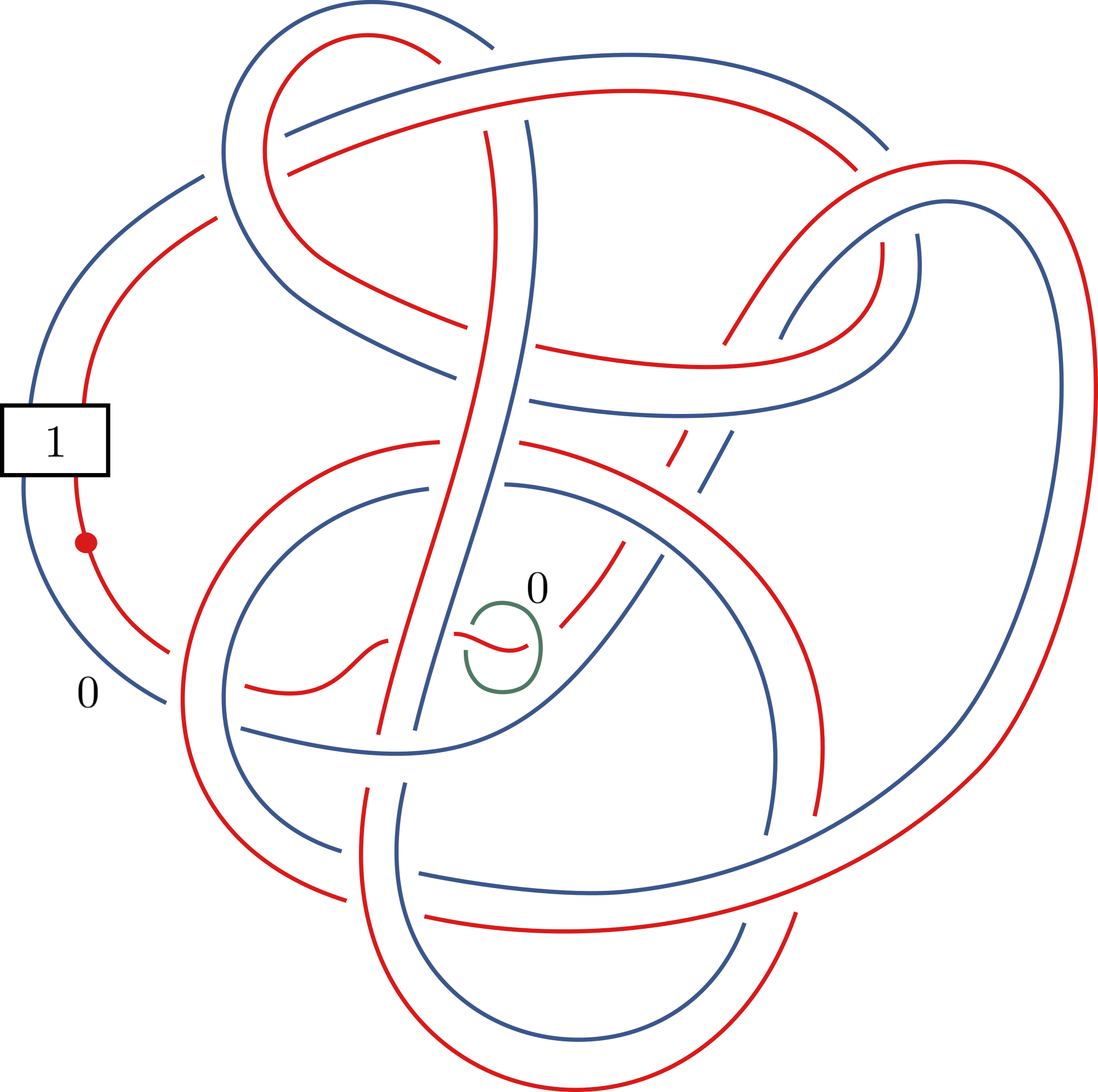}
    \caption{}
    \label{fig:1}
  \end{subfigure}
         \begin{subfigure}[b]{0.25\textwidth}
    \includegraphics[width=\textwidth]{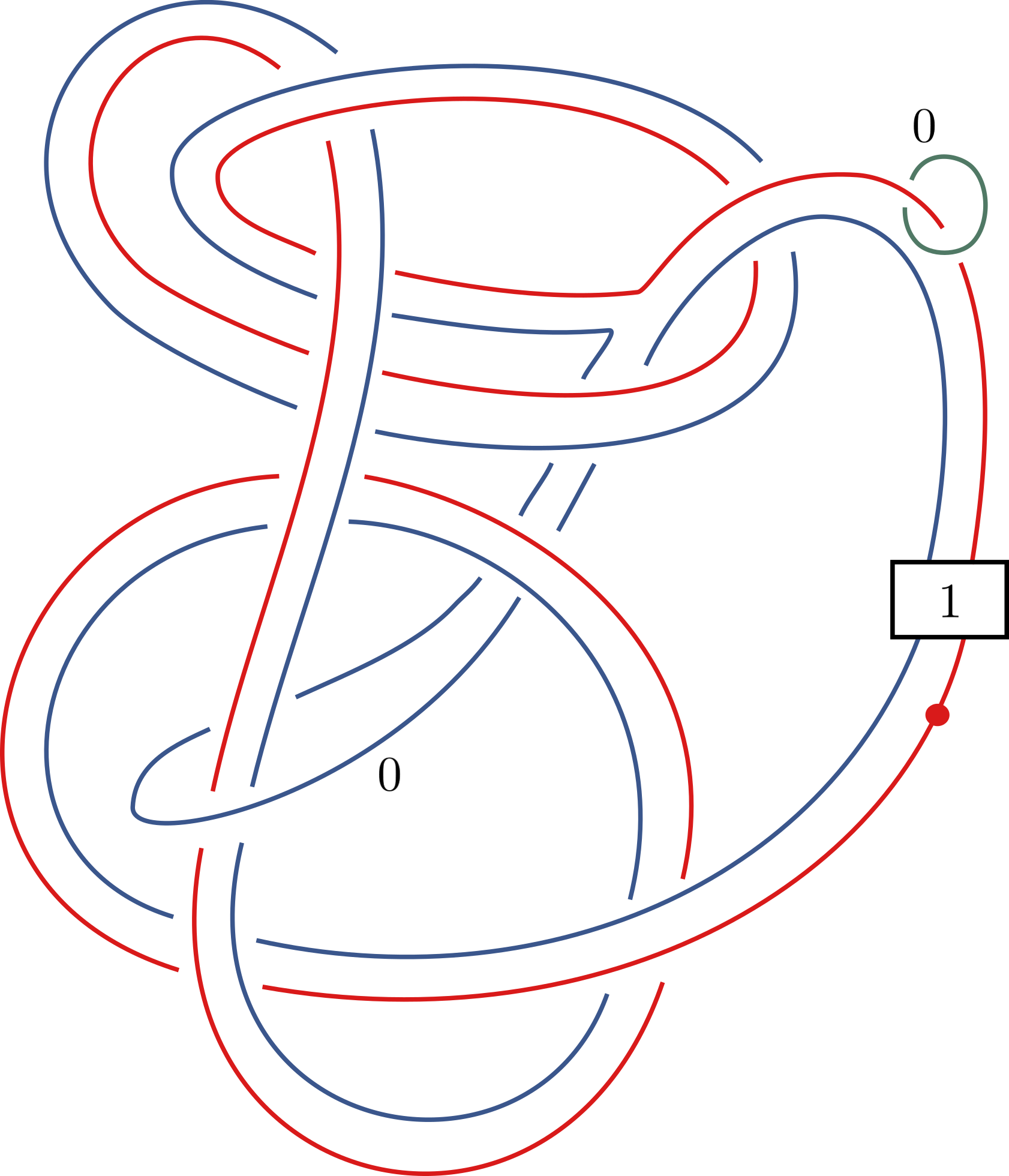}
    \caption{}
    \label{fig:2}
  \end{subfigure}
    \begin{subfigure}[b]{0.25\textwidth}
    \includegraphics[width=\textwidth]{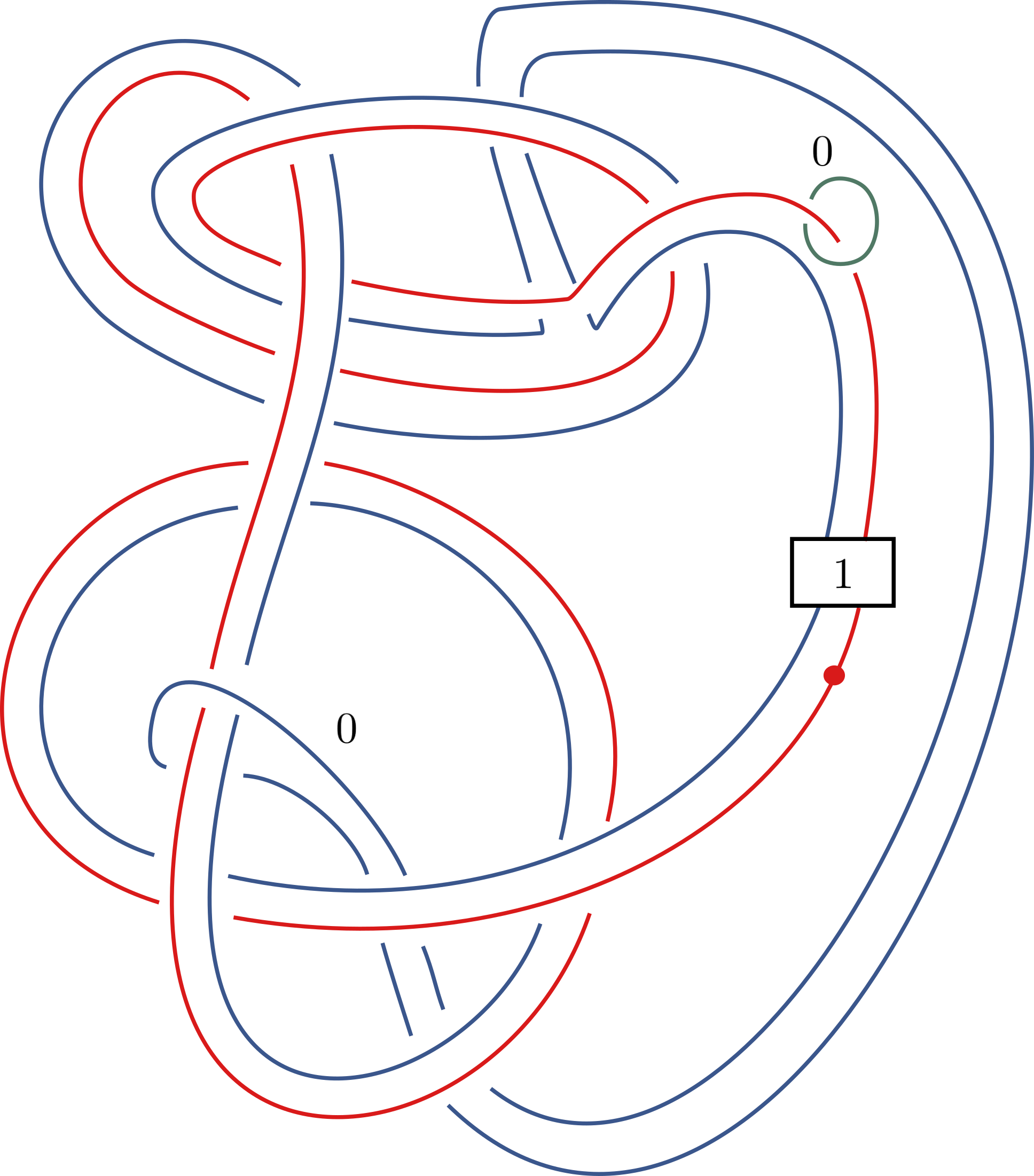}
    \caption{}
    \label{fig:3}
  \end{subfigure}
      \begin{subfigure}[b]{0.25\textwidth}
    \includegraphics[width=\textwidth]{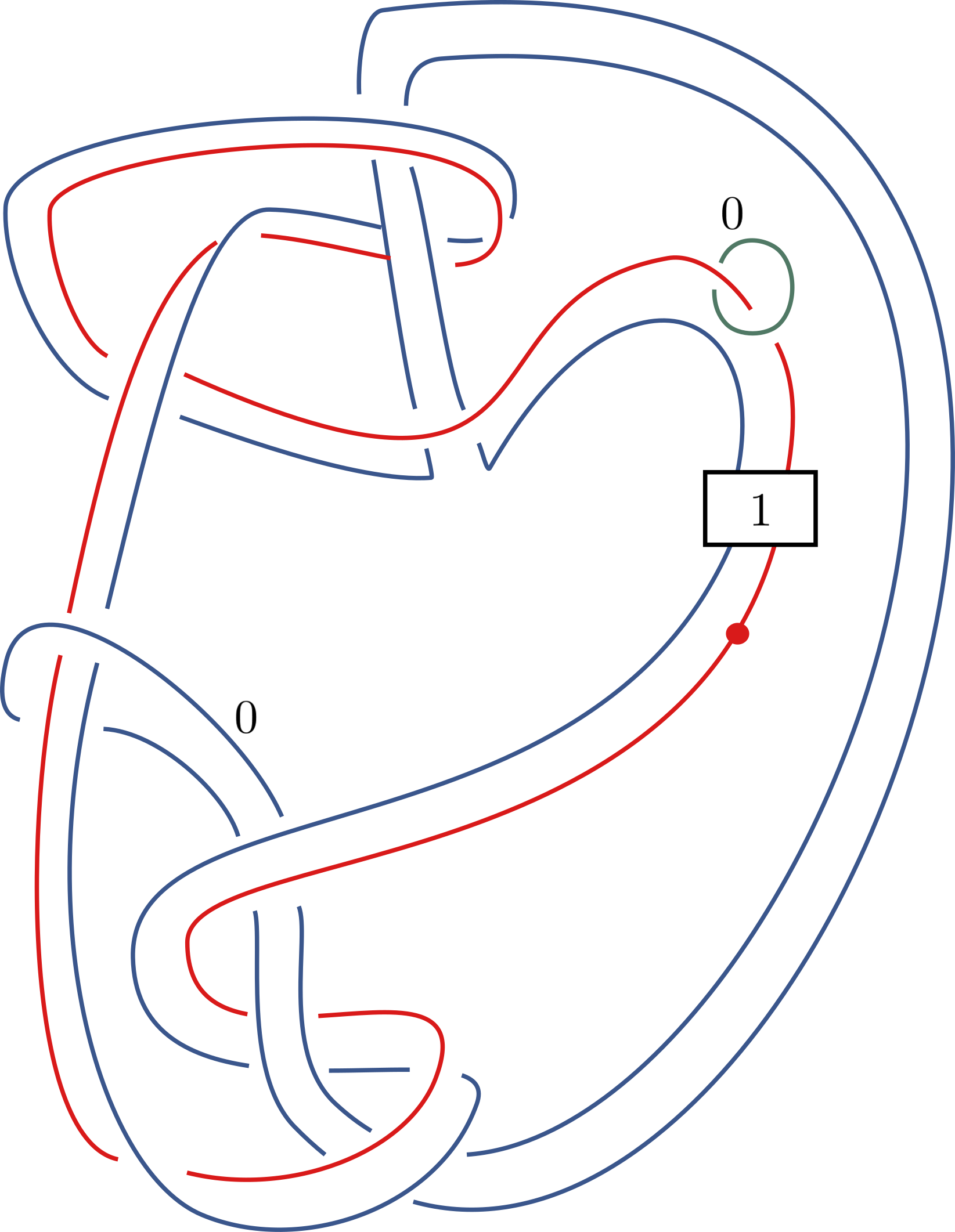}
    \caption{}
    \label{fig:4}
  \end{subfigure}
      \begin{subfigure}[b]{0.25\textwidth}
    \includegraphics[width=\textwidth]{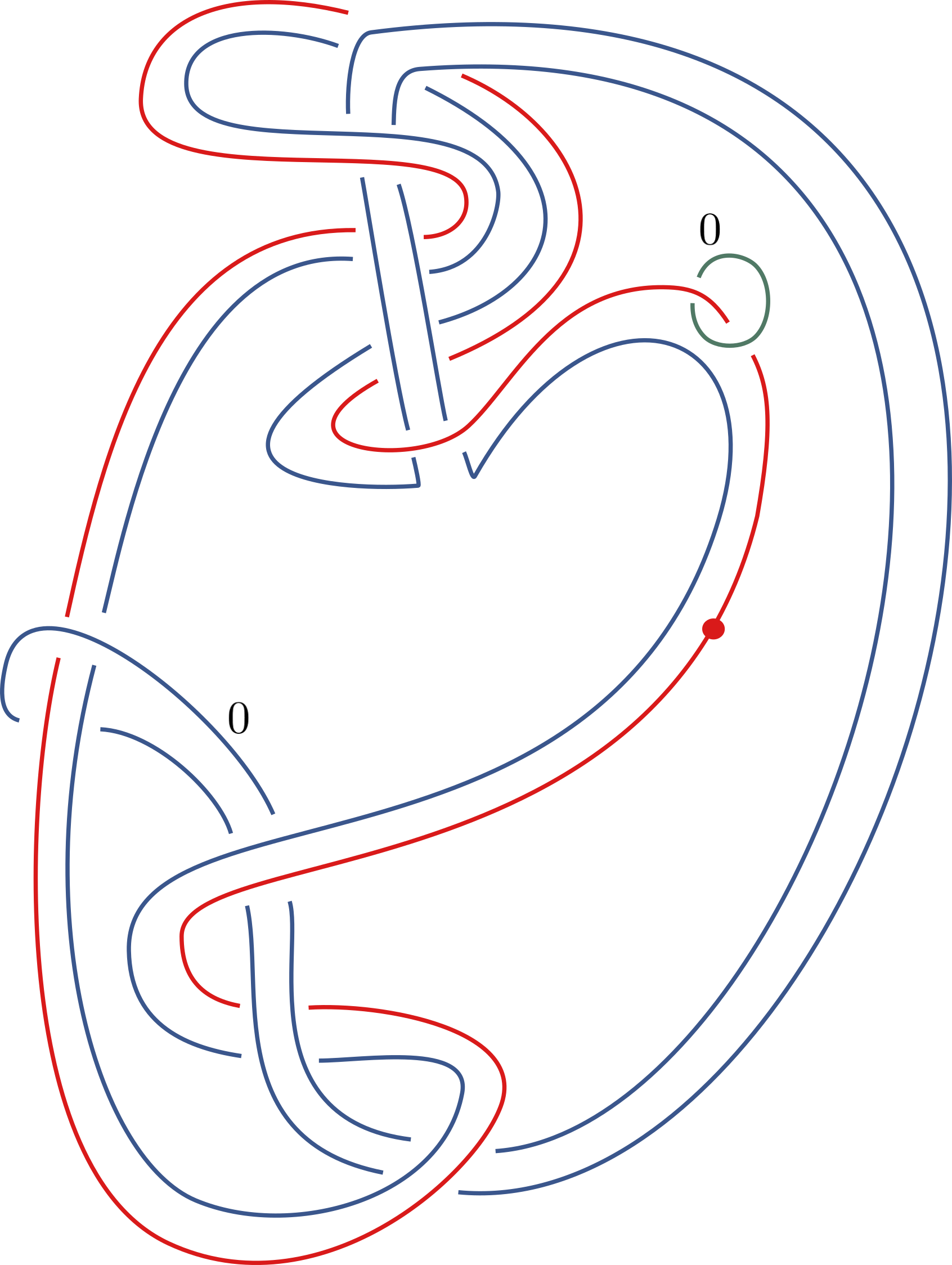}
    \caption{}
    \label{fig:5}
  \end{subfigure}  
        \begin{subfigure}[b]{0.3\textwidth}
    \includegraphics[width=\textwidth]{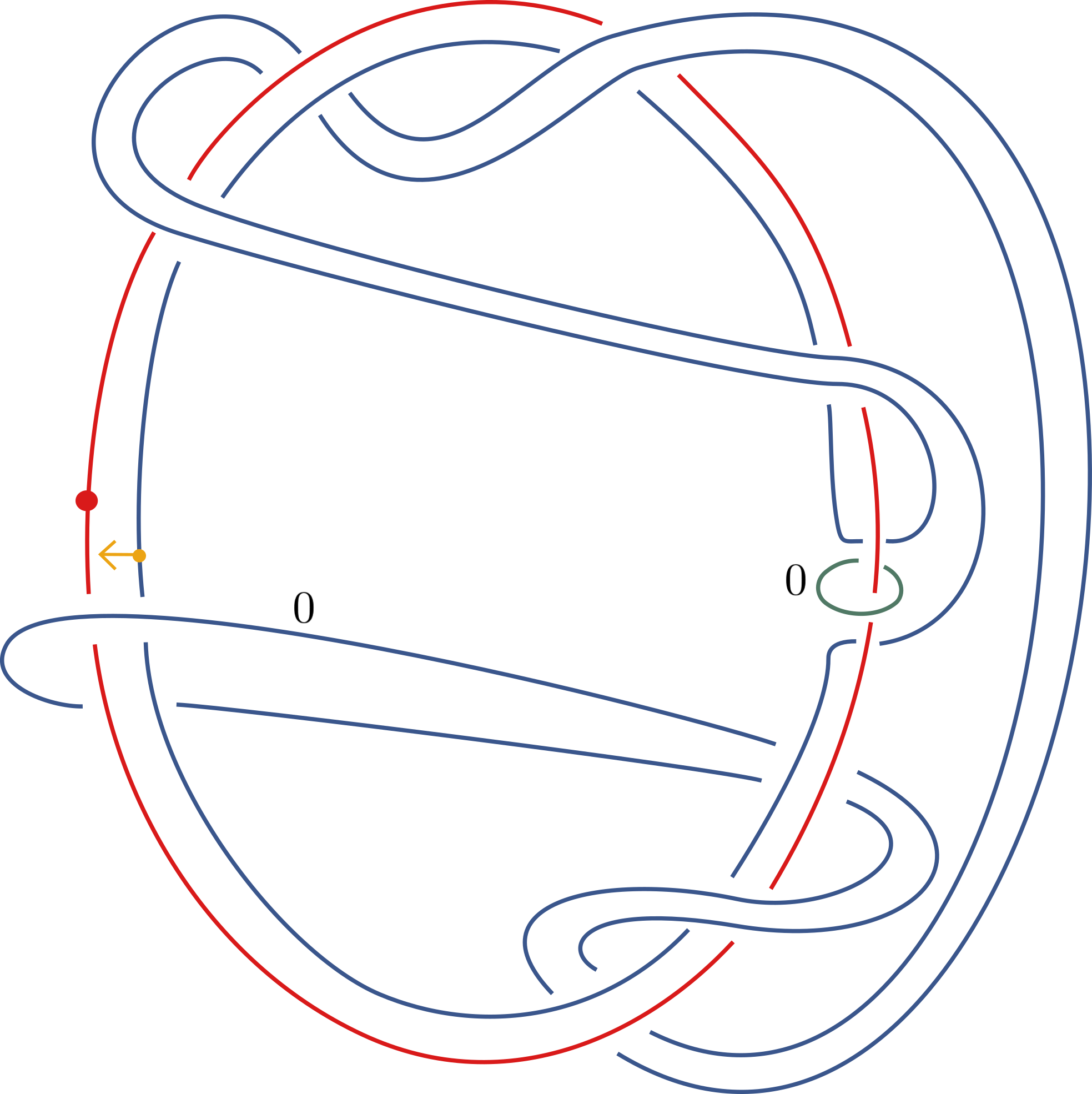}
    \caption{}
    \label{fig:6}
  \end{subfigure}
          \begin{subfigure}[b]{0.3\textwidth}
    \includegraphics[width=\textwidth]{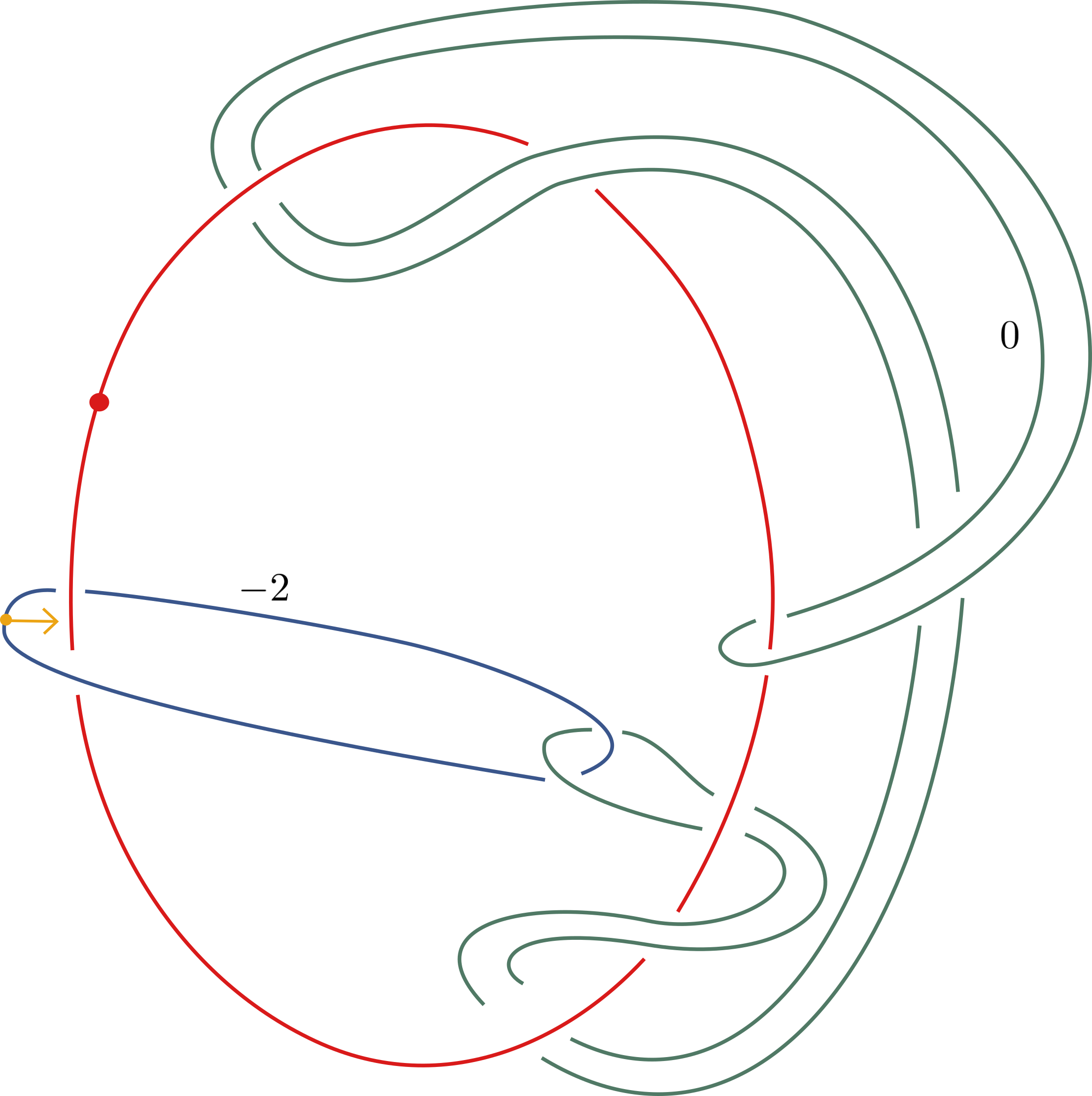}
    \caption{}
    \label{fig:8}
  \end{subfigure}
          \begin{subfigure}[b]{0.3\textwidth}
    \includegraphics[width=\textwidth]{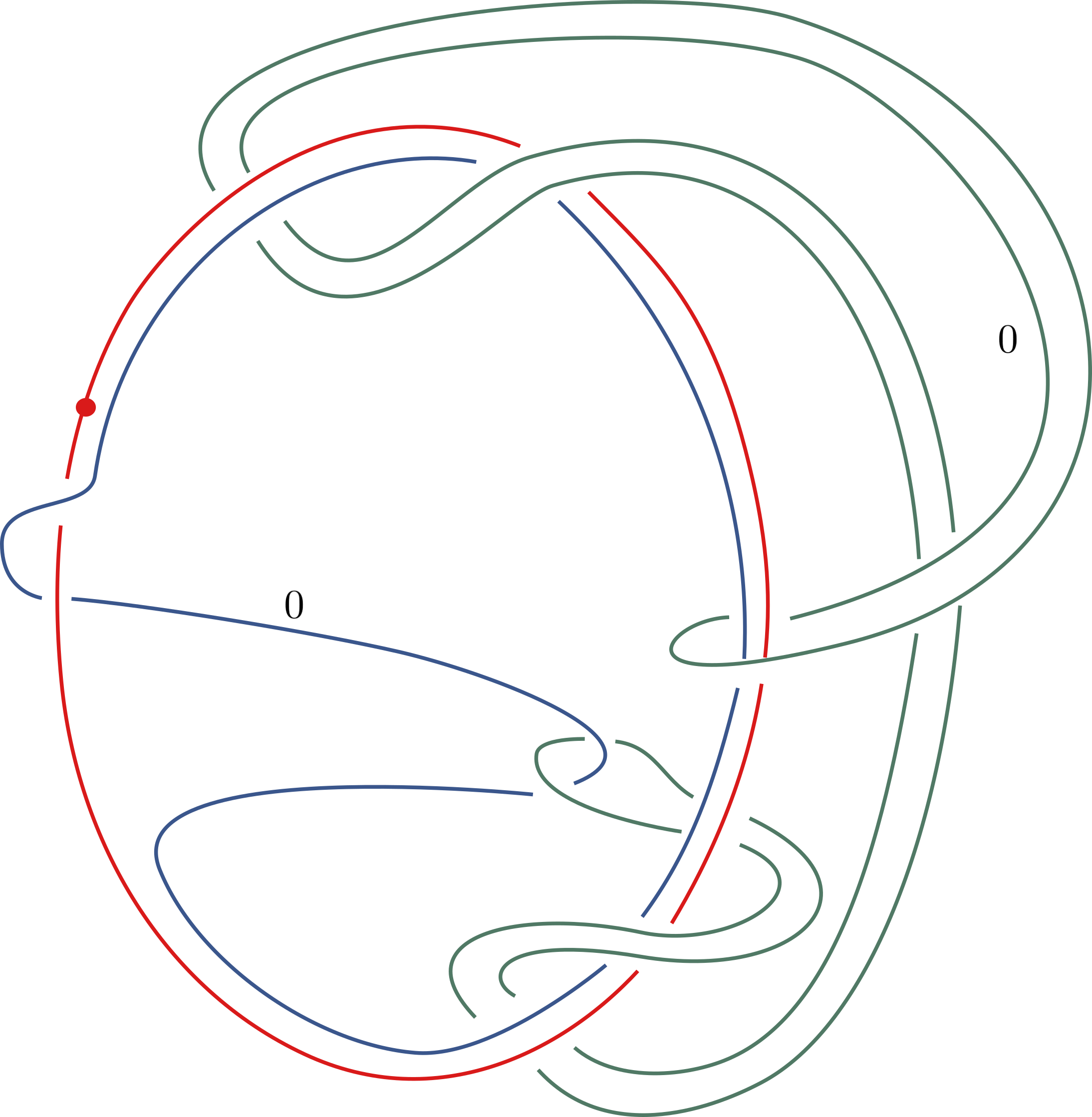}
    \caption{}
    \label{fig:85}
  \end{subfigure}
   \begin{subfigure}[b]{0.3\textwidth}
    \includegraphics[width=\textwidth]{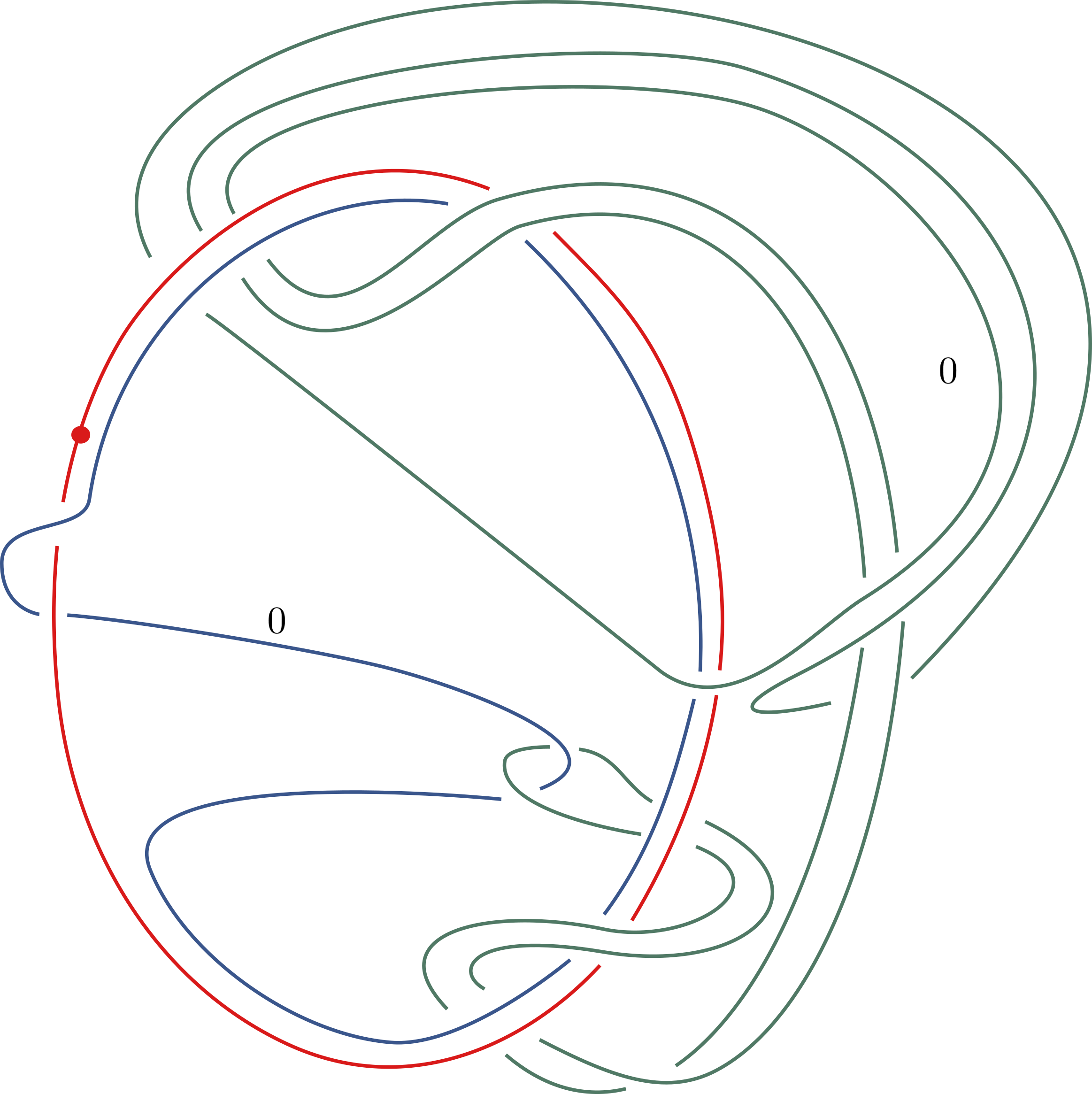}
    \caption{}
    \label{fig:9}
  \end{subfigure}
  \caption{Handle calculus exhibiting a diffeomorphism from $X(C)$ to $X(K')$ where $K'$ is the knot defined in Figure \ref{fig:J'}. Handle slides are denoted with arrows, the transition from $(L)$ to $(M)$ includes canceling a 1-2 pair, and all other changes are isotopies.\vspace{18pt}}\label{Fig:whoa}
  \end{figure}

  \begin{figure}\ContinuedFloat
         
\begin{subfigure}[b]{0.4\textwidth}
    \includegraphics[width=\textwidth]{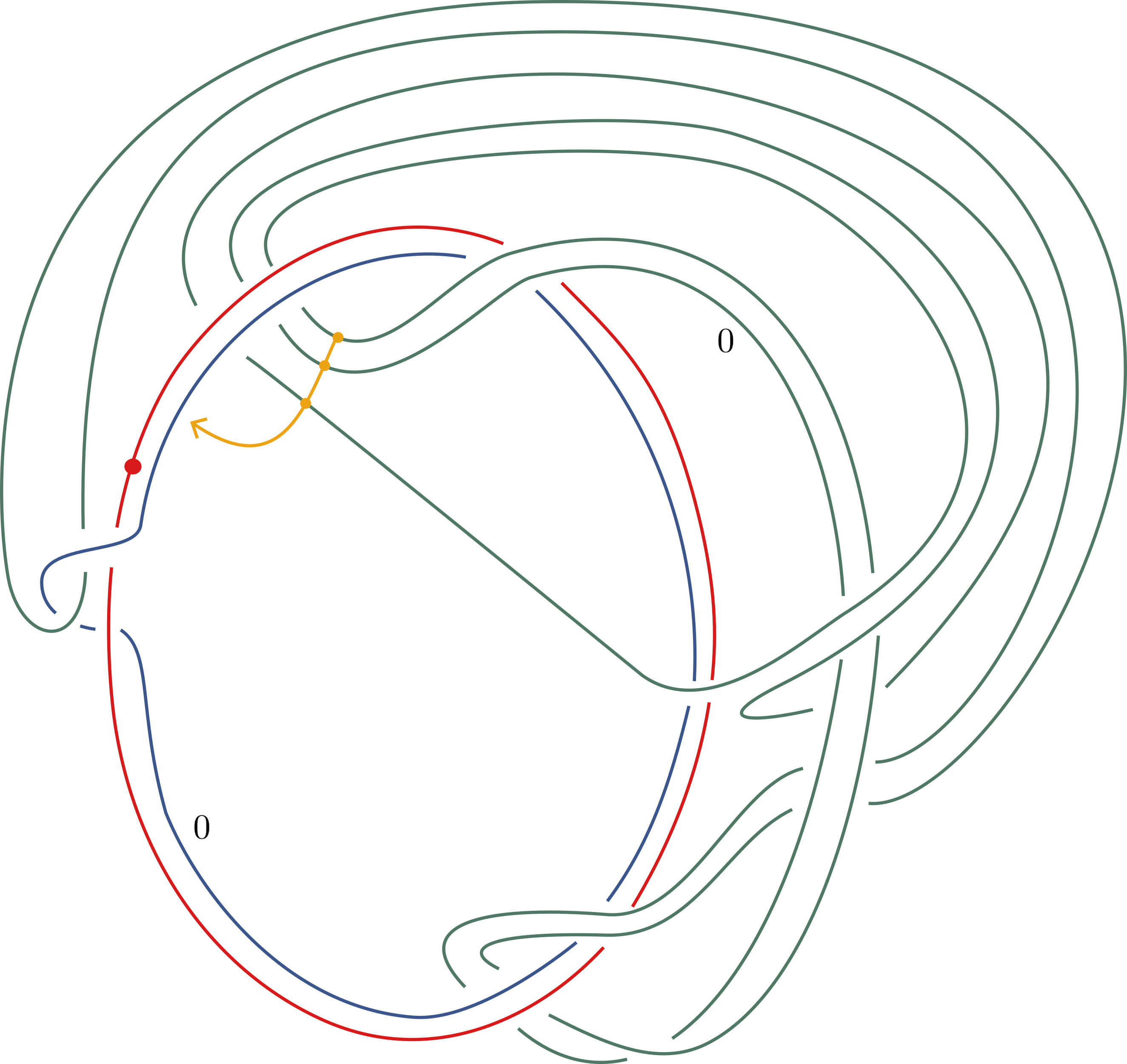}
    \caption{}
    \label{fig:10}
  \end{subfigure}
            \begin{subfigure}[b]{0.4\textwidth}
    \includegraphics[width=\textwidth]{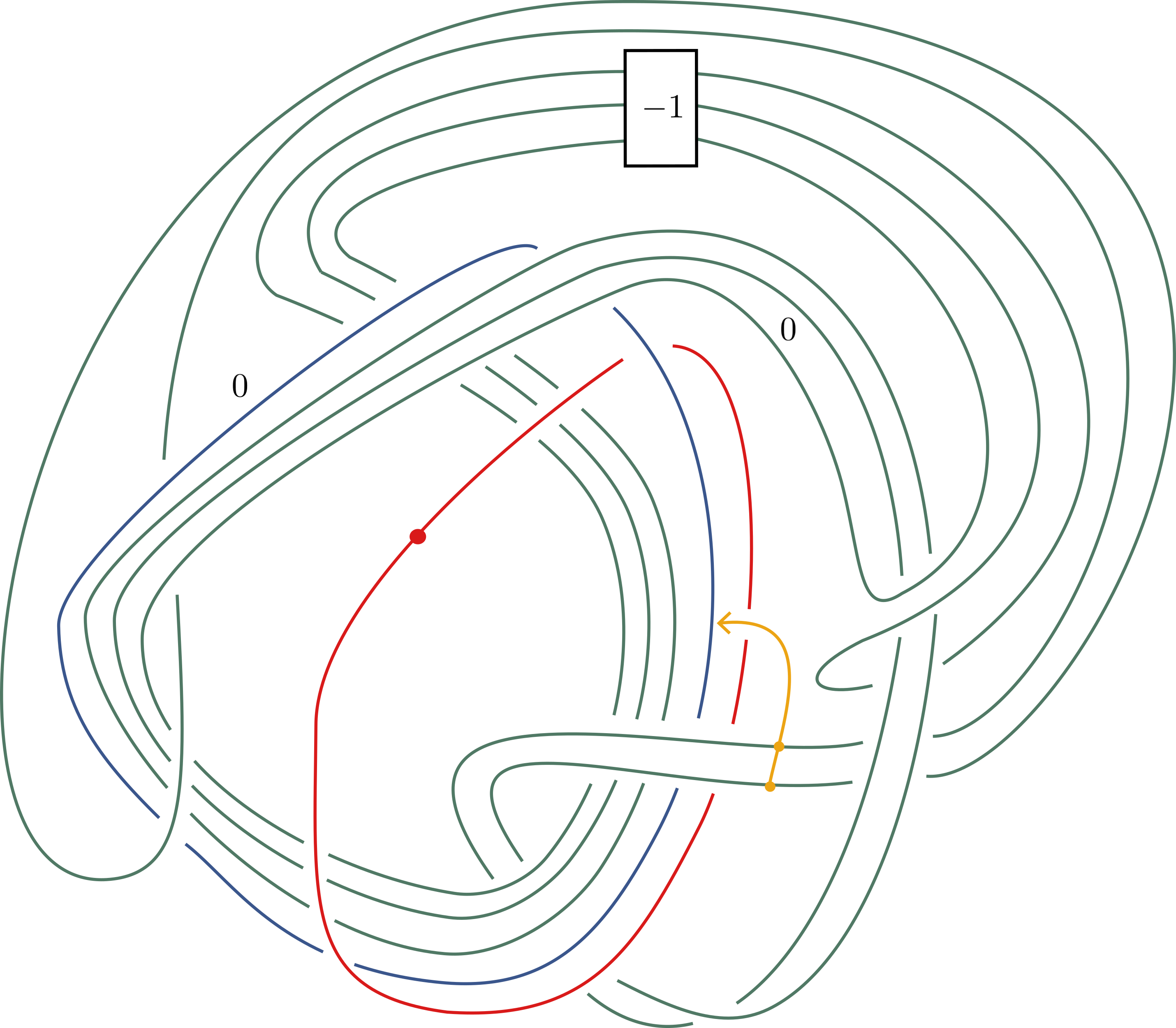}
    \caption{}
    \label{fig:11}
  \end{subfigure}
            \begin{subfigure}[b]{0.4\textwidth}
    \includegraphics[width=\textwidth]{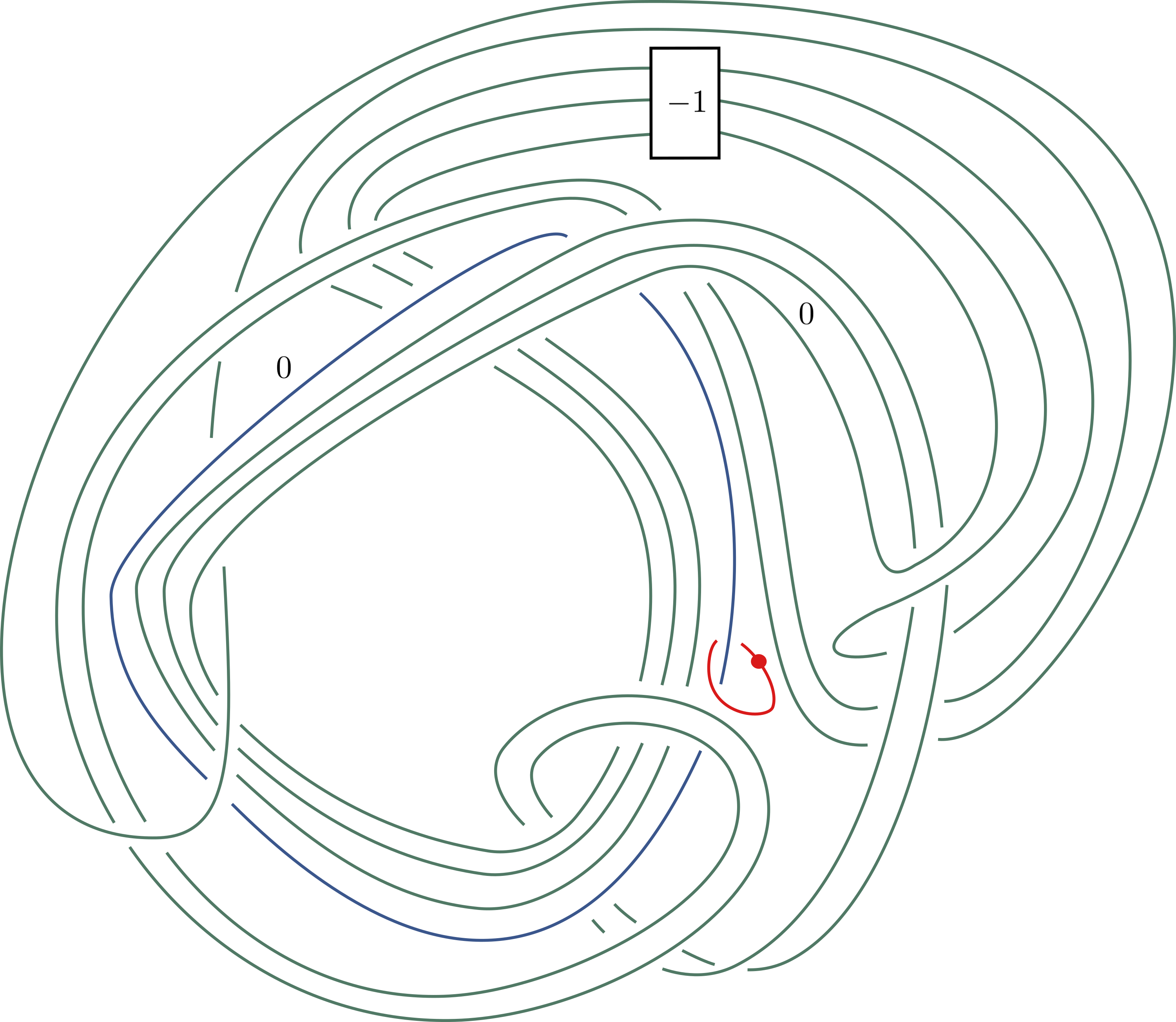}
    \caption{}
    \label{fig:12}
  \end{subfigure}
            \begin{subfigure}[b]{0.4\textwidth}
    \includegraphics[width=\textwidth]{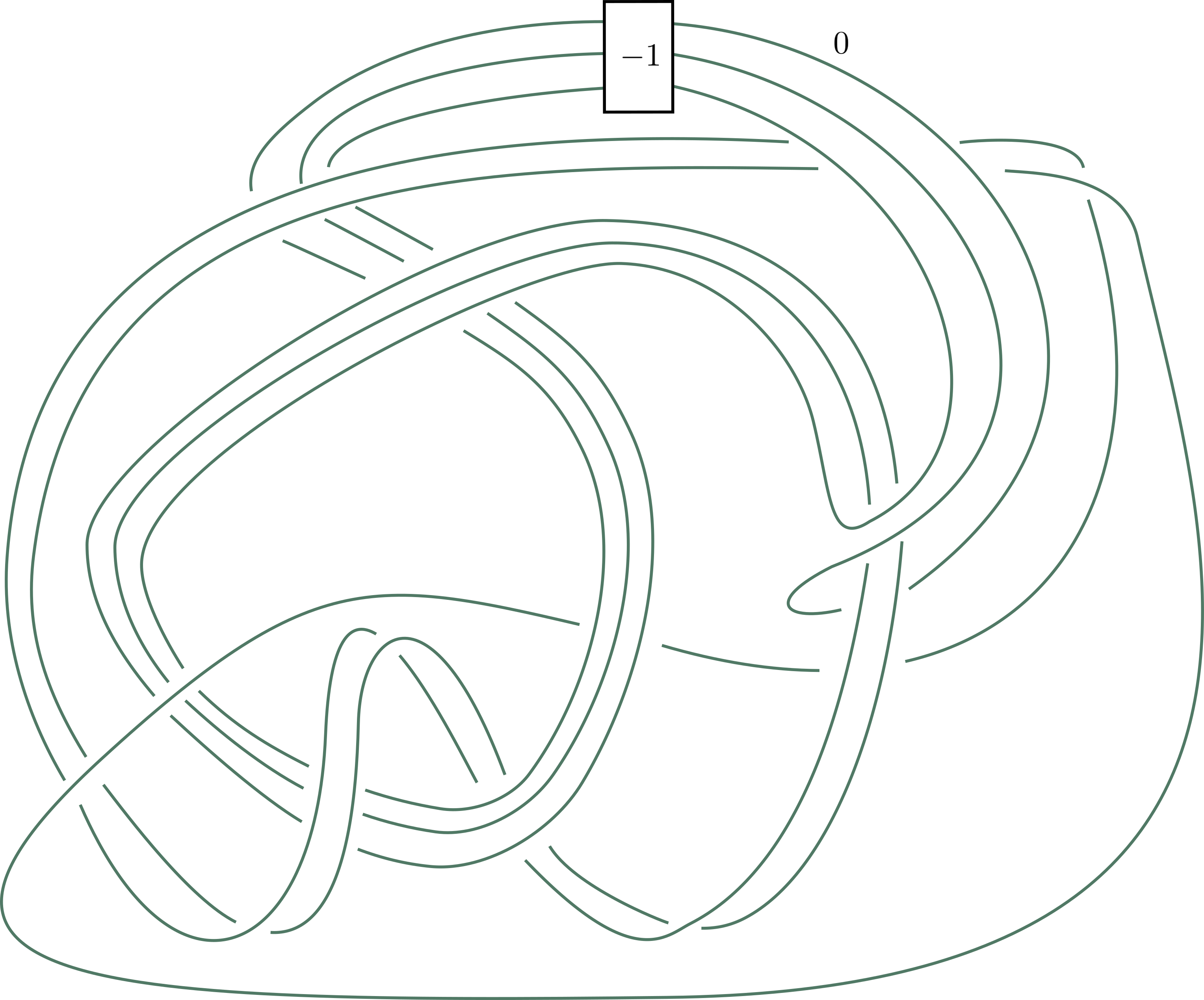}
    \caption{}
    \label{fig:13}
  \end{subfigure}
            \begin{subfigure}[b]{0.4\textwidth}
    \includegraphics[width=\textwidth]{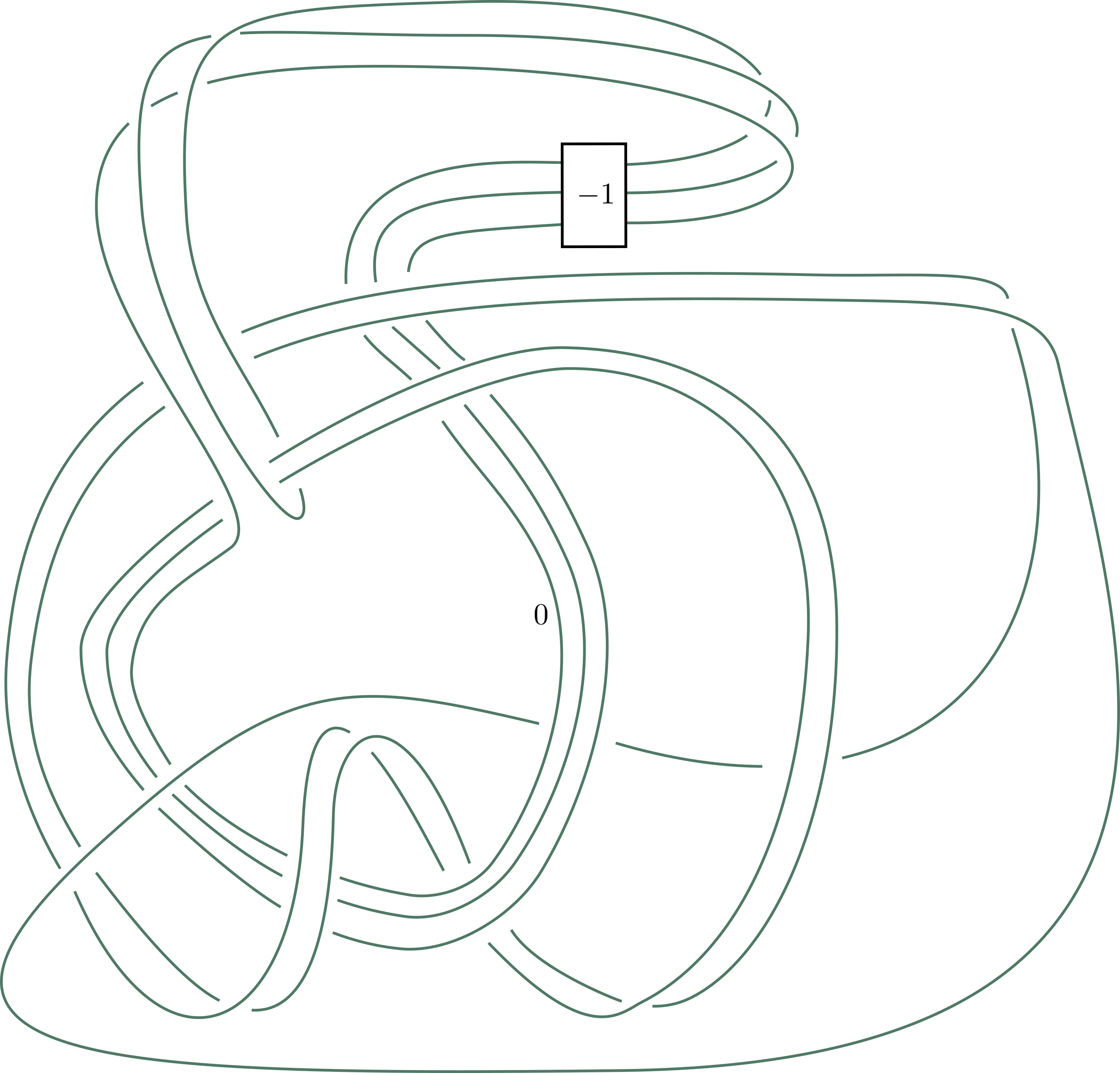}
    \caption{}
    \label{fig:14}
  \end{subfigure}
  \begin{subfigure}[b]{0.4\textwidth}
    \includegraphics[width=\textwidth]{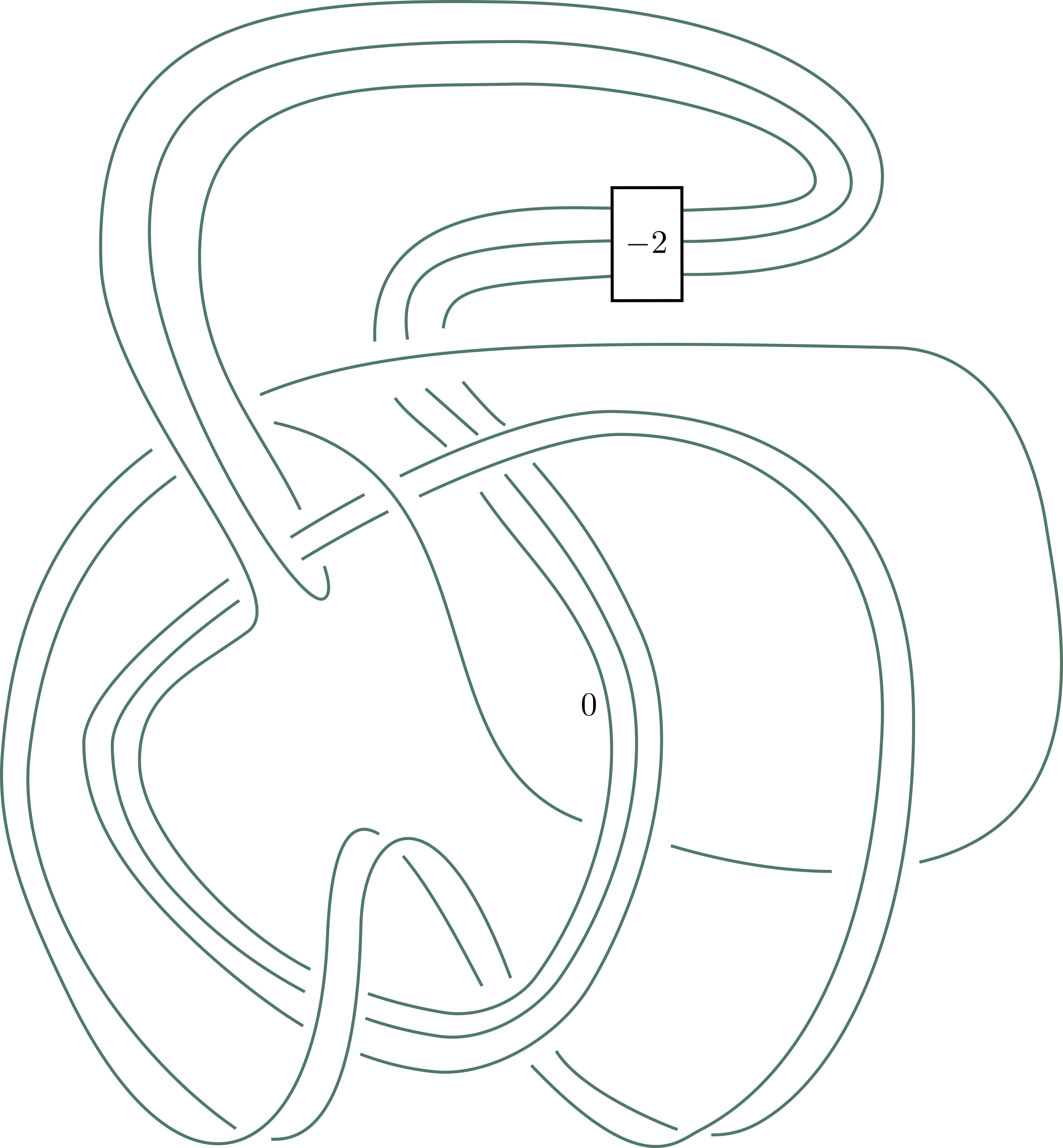}
    \caption{}
    \label{fig:15}
  \end{subfigure}
  \caption{continued}
\end{figure}
\end{proof}
\section{Showing K' is not slice}\label{Sec:obst}
In \cite{Kho00} Khovanov introduced a link invariant $Kh^{i,j}(L)$ which is the (co)homology of a finitely generated bigraded chain complex $(C^{i,j}(D_L),d)$. In our notation, $D_L$ denotes a diagram of $L$ and $i$ is refered to as the homological grading and $j$ the quantum grading. Later Lee \cite{Lee02} introduced a modification of the Khovanov differential: she considered instead a graded filtered complex $(C^{i,j}(D_L),d')$, such that $d'$ raises homological grading by 1 and for any homogenous $v\in C^{i,j}(D_L)$ the quantum grading of every monomial in in $d'(v)$ is greater than or equal to the quantum grading of $v$. As a consequence of her construction, there exists a spectral sequence with $(E^{i,j}_1(D_L), d_1)=(C^{i,j}(D_L),d)$ and $E_2^{i,j}=Kh^{i,j}(L)$ which converges to the homology of the Lee complex for $L$. We will denote this homlogy group $KhL^{i,j}(L)$. It will be relevant for us that the differentials $d_n$ of th spectral sequence have bidegree $(1,4(n-1))$ (see \cite{Ras10} or \cite{Shu07}). Lee proves that for any knot $K$ $KhL(K)=\mathbb{Q}\oplus\mathbb{Q}$ where both generators are located in grading $i=0$. Rasmussen used this to define an integer valued knot invariant $s(K)$ as follows.

\begin{thm}[\cite{Ras10}]
For any knot $K$ the generators of Lee homology are located in gradings $(i,j)=(0,s(K)\pm 1)$. If $K$ is slice then $s(K)=0$. 
\end{thm}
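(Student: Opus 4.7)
The plan is to follow Rasmussen's original argument. Lee's computation that $KhL(K) \cong \mathbb{Q} \oplus \mathbb{Q}$ is concentrated in homological degree $i = 0$ reduces the first assertion to identifying the quantum filtration levels of the two generators. I would work with Lee's explicit canonical cycles $\mathfrak{s}_o$ and $\mathfrak{s}_{\bar o}$, one for each orientation of $K$, each built as a sum over the states of the oriented resolution of a diagram of $K$. A direct calculation expresses their $q$-filtration levels in terms of the writhe and the number of Seifert circles, and a parity check shows the two levels are integers of the same parity differing by exactly $2$. Defining $s(K)$ to be their average then places the generators in gradings $(0, s(K) \pm 1)$ as required.

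For the sliceness claim, the key input is the behavior of Lee homology under cobordisms. A smooth cobordism $\Sigma \subset S^3 \times [0,1]$ from $K_0$ to $K_1$ induces a filtered map $\phi_\Sigma : KhL(K_0) \to KhL(K_1)$ whose quantum filtration degree equals $\chi(\Sigma)$; I would verify this by decomposing $\Sigma$ into elementary cobordisms (births, deaths, and saddles) and checking the degree shift on each Frobenius-algebra operation. Crucially, when $\Sigma$ is connected, $\phi_\Sigma$ sends Lee's canonical generator $\mathfrak{s}_o(K_0)$ to a nonzero scalar multiple of $\mathfrak{s}_o(K_1)$. This nonvanishing, which I expect to be the technical heart of the argument, follows from tracking how the oriented resolution and Lee's deformed idempotents interact with each elementary saddle move.

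Granting these two cobordism properties, suppose $K$ is slice and let $D \subset B^4$ be a slice disk. Removing a small open $4$-ball centered at an interior point of $D$ produces a connected genus-zero cobordism $\Sigma : K \to U$ inside $S^3 \times [0,1]$ with $\chi(\Sigma) = 0$. The induced map sends $\mathfrak{s}_o(K)$, whose filtration level is $s(K)+1$, to a nonzero multiple of $\mathfrak{s}_o(U)$, whose filtration level is $s(U)+1 = 1$; filteredness of degree $0$ then forces $s(K) + 1 \leq 1$, so $s(K) \leq 0$. Turning $\Sigma$ upside down gives a cobordism $U \to K$ with the same Euler characteristic, and the analogous bound applied to $\phi_{\Sigma^{\mathrm{op}}}(\mathfrak{s}_o(U))$ yields $s(K) \geq 0$. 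Combining the two inequalities gives $s(K) = 0$ for every slice knot.
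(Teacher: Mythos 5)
The paper does not actually prove this statement; it is quoted from Rasmussen \cite{Ras10}, so your proposal can only be measured against Rasmussen's original argument, which is indeed the strategy you outline. Your second and third paragraphs are essentially correct and correctly locate the technical heart: functoriality of the Lee theory under cobordisms with filtered degree $\chi(\Sigma)$, the nonvanishing $\phi_\Sigma([\mathfrak{s}_o(K_0)])=c[\mathfrak{s}_o(K_1)]$, $c\neq 0$, for connected $\Sigma$ (Rasmussen's Proposition~4.1, building on Lee), and then the punctured slice disk viewed as a genus-zero connected cobordism $K\to U$ together with its reverse, giving $s(K)\le 0$ and $s(K)\ge 0$.

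The genuine gap is in your first paragraph. The statement that the two generators sit in quantum filtration levels $s(K)\pm 1$ is, given the definition of $s$, exactly the assertion $s_{\max}(K)=s_{\min}(K)+2$, and this cannot be obtained by ``a direct calculation of the $q$-filtration levels of $\mathfrak{s}_o$ and $\mathfrak{s}_{\bar o}$ in terms of the writhe and the number of Seifert circles.'' At the chain level the two canonical cycles have the \emph{same} filtration level (each is a product over the Seifert circles of labels of the form $x\pm 1$, and the minimal-degree monomials agree), so no bookkeeping of writhe and circle count separates them; moreover the filtration grading of a homology class is a maximum over all representing cycles, so chain-level gradings only give lower bounds. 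In Rasmussen's paper the difference of exactly $2$ is a standalone lemma (his Proposition~3.3/Lemma~3.5), proved using the filtered module structure: conjugation $x\mapsto -x$ shows $s([\mathfrak{s}_o])=s([\mathfrak{s}_{\bar o}])$, the basepoint action is a filtered degree $-2$ map that is invertible on Lee homology and interchanges $[\mathfrak{s}_o+\mathfrak{s}_{\bar o}]$ and $[\mathfrak{s}_o-\mathfrak{s}_{\bar o}]$ up to scalar, and a parity argument (all quantum gradings of a knot's complex are odd) pins the gap at exactly $2$. Without this ingredient your definition of $s(K)$ as ``the average of the two levels'' does not yield generators at $s(K)\pm 1$. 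A smaller inaccuracy: the canonical classes lie at the \emph{minimal} level $s(K)-1$, not $s(K)+1$; your slice-disk inequalities survive this correction, but the statement as written is off by the module action.
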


\begin{proof}[Proof of Theorem \ref{Thm:J'ns}]
Let $K'$ be the knot from Proposition \ref{Prop:J'}; to show $K'$ is not slice we will calculate $s(K')$. To begin, we compute the Khovanov homology of $K'$, using Bar-Natan's Fast-Kh routines available at \cite{KAT}. These routines produce the polynomial $Kh(K)(t,q):=\Sigma_{i,j}t^iq^j\text{rank}(Kh^{i,j}(K)\otimes\mathbb{Q})$. We plot the values $\text{rank}(Kh^{i,j}(K')\otimes\mathbb{Q})$ in Table \ref{Tab:Kh}. 

\begin{table}
\resizebox{\columnwidth}{!}{%
\begin{tabular}{c||c|c|c|c|c|c|c|c|c|c|c|c|c|c|c|c|c|c|c|c|c|c|c|c|c|c|c|c|c|c|c|c|c|c|c|c}
& -3 & -2 & -1 & 0 & 1 & 2 & 3 & 4 & 5 & 6 & 7 & 8 & 9 & 10 & 11 & 12 & 13 & 14 & 15 & 16 & 17 & 18 & 19 & 20 & 21 & 22 & 23 & 24 & 25 & 26 & 27 & 28 & 29 & 30 & 31 & 32 \\
 \hline\hline
49&  &  &  &  &  &  &  &  &  &  &  &  &  &  &  &  &  &  &  &  &  &  &  &  &  &  &  &  &  &  &  &  &  &  &  & 1 \\\hline
47&  &  &  &  &  &  &  &  &  &  &  &  &  &  &  &  &  &  &  &  &  &  &  &  &  &  &  &  &  &  &  &  &  &  &  &  \\\hline
45&  &  &  &  &  &  &  &  &  &  &  &  &  &  &  &  &  &  &  &  &  &  &  &  &  &  &  &  &  &  &  &  & 1 & 1 & 1 &  \\\hline
43&  &  &  &  &  &  &  &  &  &  &  &  &  &  &  &  &  &  &  &  &  &  &  &  &  &  &  &  &  &  &  & 1 & 1 &  &  &  \\\hline
41&  &  &  &  &  &  &  &  &  &  &  &  &  &  &  &  &  &  &  &  &  &  &  &  &  &  &  &  &  & 1 & 1 & 1 & 1 &  &  &  \\\hline
39&  &  &  &  &  &  &  &  &  &  &  &  &  &  &  &  &  &  &  &  &  &  &  &  &  &  &  & 1 & 1 & 1 & 2 & 1 &  &  &  &  \\\hline
37&  &  &  &  &  &  &  &  &  &  &  &  &  &  &  &  &  &  &  &  &  &  &  &  &  &  & 1 & 1 & 2 & 1 &  &  &  &  &  &  \\\hline
35&  &  &  &  &  &  &  &  &  &  &  &  &  &  &  &  &  &  &  &  &  &  &  &  & 1 & 1 & 3 & 2 & 1 & 1 &  &  &  &  &  &  \\\hline
33&  &  &  &  &  &  &  &  &  &  &  &  &  &  &  &  &  &  &  &  &  &  &  & 2 & 3 & 2 & 1 & 1 &  &  &  &  &  &  &  &  \\\hline
31&  &  &  &  &  &  &  &  &  &  &  &  &  &  &  &  &  &  &  &  &  & 1 & 2 & 3 & 2 & 2 & 1 &  &  &  &  &  &  &  &  &  \\\hline
29&  &  &  &  &  &  &  &  &  &  &  &  &  &  &  &  &  &  &  &  & 2 & 2 & 4 & 4 & 1 &  &  &  &  &  &  &  &  &  &  &  \\\hline
27&  &  &  &  &  &  &  &  &  &  &  &  &  &  &  &  &  &  & 1 & 2 & 4 & 4 & 2 & 1 &  &  &  &  &  &  &  &  &  &  &  &  \\\hline
25&  &  &  &  &  &  &  &  &  &  &  &  &  &  &  &  &  & 2 & 3 & 5 & 3 & 2 & 1 &  &  &  &  &  &  &  &  &  &  &  &  &  \\\hline
23&  &  &  &  &  &  &  &  &  &  &  &  &  &  &  & 1 & 4 & 4 & 3 & 4 & 2 &  &  &  &  &  &  &  &  &  &  &  &  &  &  &  \\\hline
21&  &  &  &  &  &  &  &  &  &  &  &  &  &  & 2 & 3 & 4 & 4 & 3 & 1 &  &  &  &  &  &  &  &  &  &  &  &  &  &  &  &  \\\hline
19&  &  &  &  &  &  &  &  &  &  &  &  &  & 3 & 4 & 6 & 4 & 1 & 1 &  &  &  &  &  &  &  &  &  &  &  &  &  &  &  &  &  \\\hline
17&  &  &  &  &  &  &  &  &  &  &  & 1 & 4 & 6 & 4 & 2 & 1 &  &  &  &  &  &  &  &  &  &  &  &  &  &  &  &  &  &  &  \\\hline
15&  &  &  &  &  &  &  &  &  &  & 2 & 4 & 5 & 3 & 2 & 1 &  &  &  &  &  &  &  &  &  &  &  &  &  &  &  &  &  &  &  &  \\\hline
13&  &  &  &  &  &  &  &  &  & 3 & 4 & 5 & 4 & 1 &  &  &  &  &  &  &  &  &  &  &  &  &  &  &  &  &  &  &  &  &  &  \\\hline
11&  &  &  &  &  &  &  &  & 3 & 5 & 4 & 2 &  &  &  &  &  &  &  &  &  &  &  &  &  &  &  &  &  &  &  &  &  &  &  &  \\\hline
9&  &  &  &  &  &  & 1 & 3 & 5 & 3 & 1 &  &  &  &  &  &  &  &  &  &  &  &  &  &  &  &  &  &  &  &  &  &  &  &  &  \\\hline
7&  &  &  &  &  & 2 & 3 & 3 & 3 &  &  &  &  &  &  &  &  &  &  &  &  &  &  &  &  &  &  &  &  &  &  &  &  &  &  &  \\\hline
5&  &  &  &  & 1 & 3 & 3 & 2 &  &  &  &  &  &  &  &  &  &  &  &  &  &  &  &  &  &  &  &  &  &  &  &  &  &  &  &  \\\hline
3&  &  &  & 1 & 3 & 3 &  &  &  &  &  &  &  &  &  &  &  &  &  &  &  &  &  &  &  &  &  &  &  &  &  &  &  &  &  &  \\\hline
1&  &  & 2 & 2 & 2 &  &  &  &  &  &  &  &  &  &  &  &  &  &  &  &  &  &  &  &  &  &  &  &  &  &  &  &  &  &  &  \\\hline
-1&  & 1 &  & 1 &  &  &  &  &  &  &  &  &  &  &  &  &  &  &  &  &  &  &  &  &  &  &  &  &  &  &  &  &  &  &  &  \\\hline
-3&  & 2 &  &  &  &  &  &  &  &  &  &  &  &  &  &  &  &  &  &  &  &  &  &  &  &  &  &  &  &  &  &  &  &  &  &  \\\hline
-5& 1 &  &  &  &  &  &  &  &  &  &  &  &  &  &  &  &  &  &  &  &  &  &  &  &  &  &  &  &  &  &  &  &  &  &  &  \\
\end{tabular}%
}\caption{}\label{Tab:Kh}
\end{table}

Since the Lee homology is supported in grading $i=0$, we see that $s(K')\in\{0,2\}$. To demonstrate that in fact $s(K')=2$ we will use the fact that all higher differentials in the spectral sequence to the Lee homology have bidegree $(1,4(n-1))$. Consider a generator $x$ of $Kh^{0,3}(K')$. If $x$ were to die on the $n^{th}$ page of the spectral sequence ($n\ge 2$) we would have to have that either $d_n(x)\neq 0$ or there exists a $y$ with $d_n(y)=x$. Since $Kh^{i,j}(K')$ has no generators in gradings $\{1,4(n-1)+3\}$ or $\{-1,-4(n-1)-3\}$ for any $n\ge 2$, neither of these can happen. As such, $x$ survives to the $E^\infty$ page, and $s(K')=2$. 
\end{proof}

\bibliography{satellite}
\end{document}